\newcommand{\R}{{\mathbb{R}}}
\begin{document}

%\begin{frontmatter}

\title{Topological graph persistence}

\titlerunning{Topological graph persistence}  % abbreviated title (for running head)
%                                     also used for the TOC unless
%                                     \toctitle is used
%
\author{Mattia G. Bergomi$^1$, Massimo Ferri$^2$, Lorenzo Zuffi$^2$}
\authorrunning{M.G. Bergomi, M. Ferri, L. Zuffi} % abbreviated author list (for running head)
%
%%%% list of authors for the TOC (use if author list has to be modified)
%\tocauthor{author1, author2}
%
\institute{$^1$ Champalimaud Center for the Unknown, Lisbon, Portugal\\
$^2$ ARCES and Dept. of Mathematics, Univ. of Bologna, Italy\\
\email{mattia.bergomi@neuro.fchampalimaud.org, massimo.ferri@unibo.it, lorenzo.zuffi@studio.unibo.it},
}

\maketitle              % typeset the title of the contribution

\begin{abstract}
Graphs are a basic tool for the representation of modern data. 
The richness of the topological information contained in a graph goes far beyond its mere interpretation as a one-dimensional simplicial complex. We show how topological constructions can be used to gain information otherwise concealed by the low-dimensional nature of graphs. We do that by extending previous work of other researchers in homological persistence, by proposing novel graph-theoretical constructions. Beyond cliques, we use independent sets, neighborhoods, enclaveless sets and a Ramsey-inspired extended persistence.
\end{abstract}

\begin{keywords}
Clique, independent set, neighborhood, enclaveless set, Ramsey.
\end{keywords}
%\end{frontmatter}

\section{Introduction}

Currently data are produced massively and rapidly. A large part of these data is either naturally organized, or can be represented as  graphs or networks. In recent years, topological persistence has proved to be an invaluable tool for the exploration and understanding of these data types.

Albeit graphs can be considered as topological objects \textit{per se}, given their low dimensionality, only limited information can be obtained by studying their topology. It is possible to grasp more information by superposing higher-dimensional topological structures to a given graph. Many research lines, for example, focused on building $n$-dimensional complexes from graphs by considering their $(n+1)$-cliques (see, e.g., Section~\ref{sec:stateofart}).
Here, we explore how topological persistence can be used to study both classical and new simplicial complexes drawn from graphs. In particular, we will focus on the novel information disclosed by considering graph-theoretical concepts so far neglected in the literature, at least to our knowledge.

In Section~\ref{general} we set basic terminology and notation for simplicial complexes and graphs. Section~\ref{togrape} is devoted to the definition of several constructions of simplicial complexes based on as many basic graph-theoretical concepts, namely: The simplicial complexes of cliques, neighborhoods, enclaveless and independent sets. For each construction, we discuss both the simplicial complex representation and its filtration. The stability with respect to the bottleneck distance is examined in a separated subsection. Finally, we present an ``extended persistence''-like construction based on the Ramsey principle.

\subsection{State of art\label{sec:stateofart}}

Graphs and persistence are bound together since the early days, long before the term ``persistence'' was even coined \cite{VeUrFrFe93}. Graphs were a tool for managing the discretization of filtered spaces in applied contexts.

As far as we know, a true use of persistence  in the study of graphs {\it per se} started with \cite{HoMa*09}, where the clique and neighborhood complexes were built on a time-varying network for application in statistical mechanics (see also \cite{MaZh*16}). A research aiming at the physical application of persistence to polymer models of hypergraphs is developed in \cite{AlCo*17}.

Complex networks have been studied with persistent homology also in \cite{HuRi17,PaMo*17}. In both cases, the main example is a network of collaborating people; simplices are formed on the basis of relationship measures between members.  

Brain connections have been studied through complexes associated to graphs by various authors with different viewpoints and techniques, with exciting results: \cite{PeEx*14,ReNo*17,SiGi*18}.

\section{General preliminaries}\label{general}

We fix terminology for simplicial complexes and graphs respectively in Subsections~\ref{complexes} and \ref{graphs}.

\subsection{Simplicial complexes}\label{complexes}

First, we recall that a {\it simplicial complex} $K$ (an {\it abstract} simplicial complex in the terminology of many authors) is a set of simplices, where a {\it simplex} is a finite set of elements ({\it vertices}) of a given set $V(K)$, such that
\begin{enumerate}[(i)]
\item any set of exactly one vertex is a simplex of $K$
\item any nonempty subset of a simplex of $K$ is a simplex of $K$ \cite[Sect. 3.1]{Sp94}. 
\end{enumerate}
A simplex consisting of $n+1$ vertices is said to have {\it dimension} $n$ and to be an $n-$simplex. The dimension of $K$ is the maximum dimension of its simplices. In the remainder all simplicial complexes will be finite.

A standard way (actually a functor) to associate a topological space $|K|$ (the {\em space} of $K$) to a simplicial complex $K$ is through barycentric coordinates \cite[Sect. 3.1]{Sp94}. $|K|$ is defined as the set of functions $\alpha:K \to [0, 1]$ such that:
\begin{itemize}
\item For any $\alpha$, the set $\{v\in K \, | \, \alpha(v)\ne 0\}$ is a simplex of $K$,
\item For any $\alpha$, one has $\sum_{v\in K}\alpha(v)=1$
\end{itemize}

\noindent and the topology comes from the $L^2$ metric, but the most usual way of thinking of $K$ in geometrical terms is by its possible embeddings into a Euclidean space \cite[Sect. 3.2, Thm. 9]{Sp94}. This is what we shall do in both text and figures throughout the article.

We refer to \cite{Sp94} for terminology and notions of simplicial and algebraic topology; another very nice reference is \cite{Ha02}. For the sake of clarity, the simplex having vertices $v_0, \ldots, v_n$ will be denoted by $\langle v_0, \ldots, v_n \rangle$.

\subsection{Persistent homology}

\label{ph}
Persistent homology is a branch of computational topology, of remarkable success in shape analysis and pattern recognition. Its key idea is to analyze data through {\em filtering functions}, i.e. continuous functions $f$ defined on a suitable topological space $X$ with values e.g. in $\R$ (but sometimes in $\R^n$ or in a circle). Given a pair $(X, f)$, with $f:X \to \R$ continuous, for each $u\in \R$ the {\em sublevel set} $X_u$ is the set of elements of $X$ whose value through $f$ is less than or equal to $u$.

For each $X_u$ one can compute the {\em homology modules} $H_r(X_u)$. As there exist various homology theories, some additional hypotheses might be requested on $f$ depending on the choice of the homology. Here coefficients will be in a fixed field.

Of course, if $u<v$ then $X_u \subseteq X_v$. There corresponds a linear map $\iota^r_{(u, v)}: H_r(X_u)\to H_r(X_v)$. On $\Delta^+ = \{(u, v)\in \R^2 \, | \, u<v\}$ we can then define the {\em $r$-Persistent Betti Number} ($r$-PBN) function
\[\begin{array}{cccc}
   \beta^r_{(X,f)}: &\Delta^+ & \to & \mathbb{Z}\\
                    & (u, v) & \mapsto &\dim\textnormal{Im}(\iota^r_{(u, v)})
\end{array}\]
All information carried by $r$-PBN's is condensed in some points (dubbed {\em proper cornerpoints}) and some half-lines ({\em cornerlines}); cornerlines are actually thought of as {\em cornerpoints at infinity}. Cornerpoints (proper and at infinity) build what is called the {\em persistence diagram} relative to dimension $r$. Figure~\ref{M} shows a letter ``M'' as space $X$, ordinate as function $f$ on the left, its 0-PBN function at the center and the corresponding persistence diagram on the right.

\begin{figure}[htb]
\begin{center}
  \includegraphics[width=0.8\textwidth]{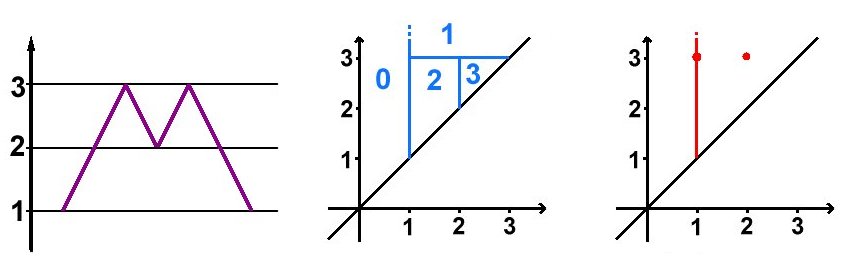}
    \caption{Letter M, its 0-PBM function and the corresponding persistence diagram, relative to filtering function ordinate.}\label{M}
    \end{center}
\end{figure}

\begin{remark}
The theory also contemplates a {\em multiplicity} for cornerpoints (proper and at infinity); multiplicity higher than one is generally due to symmetries.
\end{remark}

For homology theory one can consult any text on algebraic topology, e.g. \cite{Ha02}. For persistent homology, two good references are \cite{EdHa08,EdHa09}.

\subsection{Graphs}\label{graphs}

For the purposes of this article, a graph $G$ will be defined as a simplicial complex of dimension $1$; we shall write $G=\big(V(G), E(G)\big)$ where $V$ is its set of 0-simplices or {\em vertices} and $E$ its set of 1-simplices or {\em edges}.
So, in graph-theoretical terms they are finite simple graphs. The category {\bf Graph} will have graphs as objects and simplicial maps as morphisms. For both graph-theoretical notions and terminology we refer to \cite{BoMu11}.

A {\em weighted graph} will be a pair $(G, f)$ where $G=\big(V(G),E(G)\big)$ is a graph and $f:E \to \mathbb{R}$ is a function, called {\em weight function}; sometimes the range of $f$ will instead be $\mathbb{R} \cup \{+\infty\}$.

\section{Graph persistence}\label{togrape}

\begin{figure}[htb]
\centering
\begin{minipage}[ht]{0.3\linewidth}
\centering
\includegraphics[width=1\linewidth]{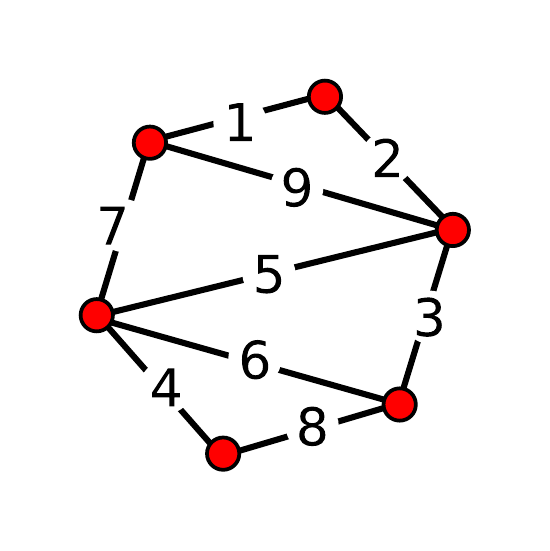}
\end{minipage}
\hfill
\begin{minipage}[ht]{0.33\linewidth}
\centering
\includegraphics[width=1\linewidth]{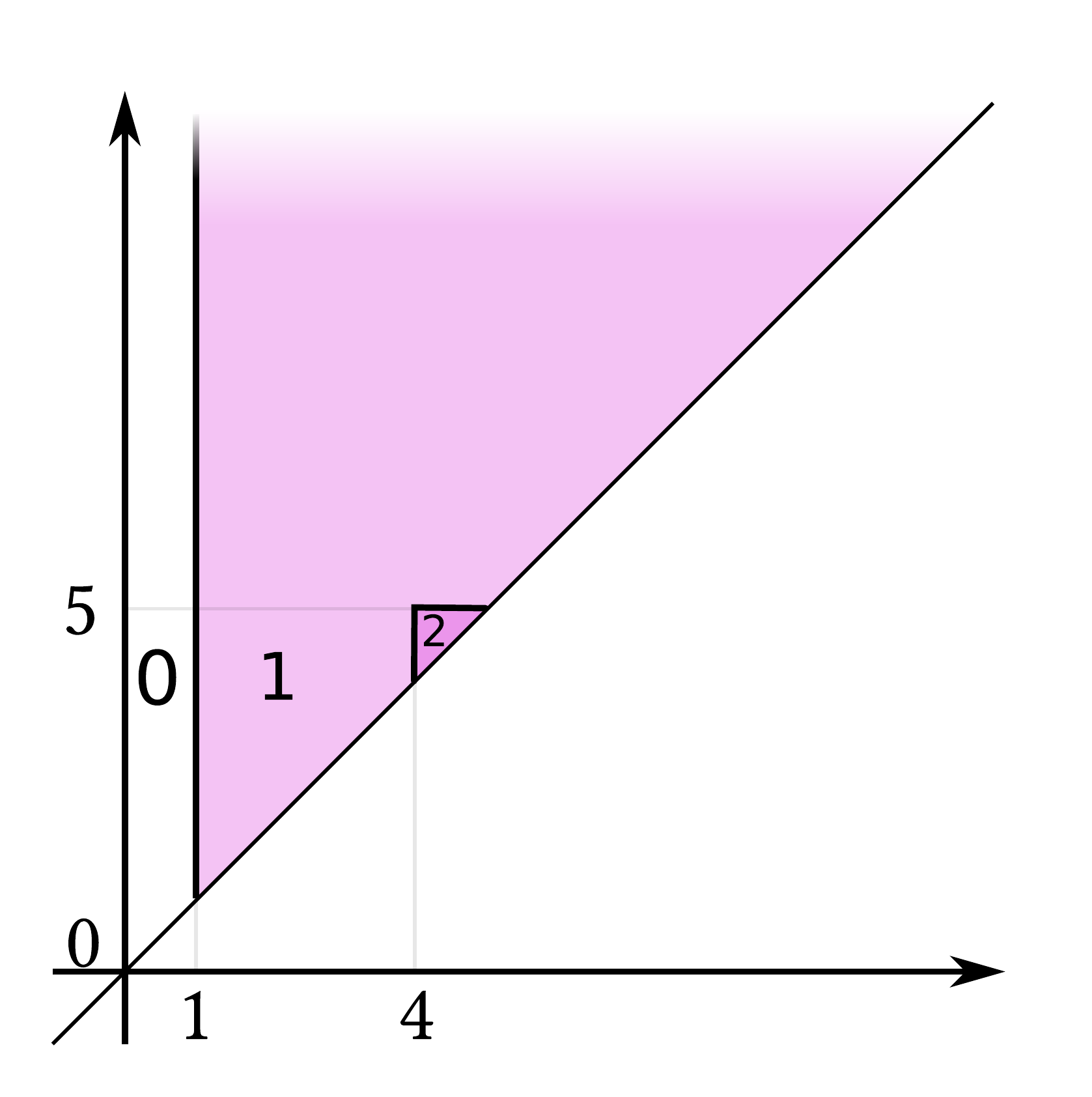}
\end{minipage}
\hfill
\begin{minipage}[ht]{0.33\linewidth}
\centering
\includegraphics[width=1\linewidth]{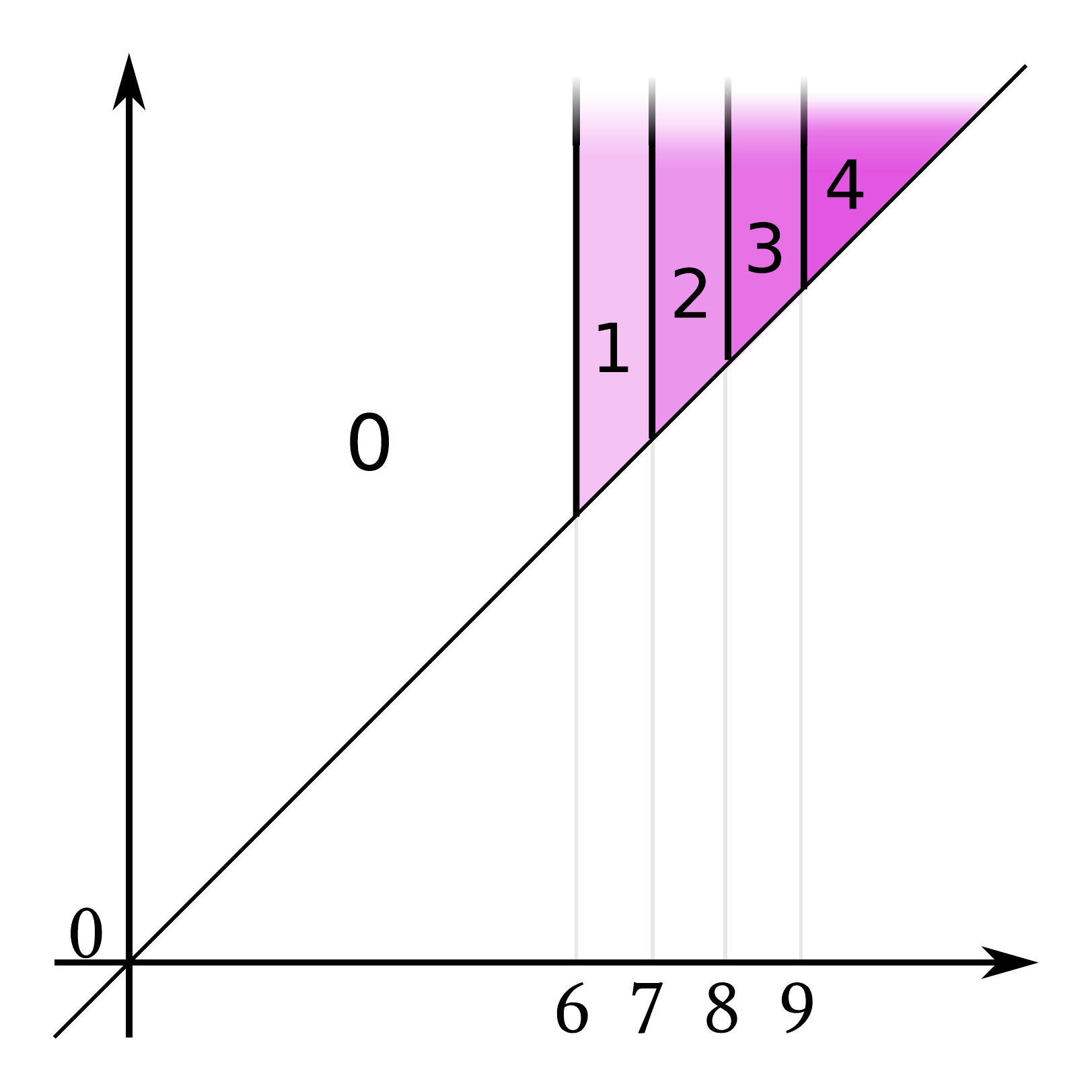}
\end{minipage}
\caption{From left to right: A weighted graph, its 0-PBN and its 1-PBN functions as the space of a simplicial complex.}\label{fig:betti}
\end{figure}

\noindent Of course, we can obtain a filtered topological space as the space of the graph. This gives rise to 0- and 1-PBN functions and persistence diagrams (see, e.g., Fig.~\ref{fig:betti}). In our opinion, this is too limited a view on the graph. More and more interesting information, on the relations represented by the weighted graph, can be conveyed by building other simplicial complexes related with it.

The ``leit-motiv'' of this section is the analysis of simplicial complexes built from (weighted) graphs, to study then the filtration of the complexes with the methods of persistent homology. Our goal is actually to spot particular classes of sets in a graph such that the conditions (i) and (ii) of the definition of simplicial complex hold. A thorough treatise on these configurations can be found in \cite{Jo08}. 

\subsection{Complex of cliques}\label{sec:cli}

Let $G$ be a graph. A $k$-\textit{clique} of $G$ is a set of $k$ vertices ($k>0$) whose induced subgraph is complete. We recall that the set $Cl(G)$ of cliques in $G$ is a simplicial complex. 

In general, not every simplicial complex $K$ can be represented as $Cl\left(G\right)$ for some $G$: Let $K$ be the simplicial complex formed by an $h$-simplex $\sigma$ ($h>1$) and all its faces. $K$ and $K-\{\sigma\}$ share the same 1-skeleton $K^1$: There does not exist a graph $G^\prime$ such that $Cl\left(G^\prime\right) = K-\{\sigma\}$. 

It is possible to overcome this issue by considering the barycentric subdivision of the simplicial complex. Formally, for every simplicial complex $K$, let $K^\prime$ be its barycentric subdivision and  $G = \left(K^\prime\right)^1$ the graph built on the $1$-skeleton of $K^\prime$. Then, we have that $Cl\left(G\right)$ is isomorphic to $K^\prime$ and $\left|Cl\left(G\right)\right|$ is homeomorphic to $|K|$.

If $K= Cl(G)$, its suspension $\Sigma(K)$ is the  complex of cliques of
$$CSusp(G) = \big(V(G)\cup \{x, y\}, E(G) \cup \{\langle x, v \rangle, \langle v, y\rangle \, | \, v\in V(G)\}\big)$$
where $x, y \not\in V(G)$.
In particular, any sphere of dimension $h\ge 1$ can be triangulated by the clique complex of a suitable nonempty graph, e.g. starting from a 4-cycle for $\mathbb{S}^1$ and applying $CSusp$ the necessary number of times. Therefore, the following corollary holds.

\begin{corollary}\label{clbetti}
For any finite sequence $\sigma$ of nonnegative integers, there exists a graph $G$ such that $\sigma$ is the sequence of Betti numbers of $Cl(G)$.
\hfill $\square$
\end{corollary}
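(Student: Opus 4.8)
The plan is to build the desired graph $G$ as a disjoint union of pieces, one for each entry of the sequence $\sigma = (b_0, b_1, \ldots, b_n)$, so that each piece contributes to exactly one Betti number and the contributions add up correctly under the disjoint union. First I would recall that for a disjoint union of graphs $G = G_1 \sqcup \cdots \sqcup G_m$ we have $Cl(G) = Cl(G_1) \sqcup \cdots \sqcup Cl(G_m)$ as simplicial complexes, hence $|Cl(G)|$ is the topological disjoint union of the $|Cl(G_i)|$, and so $H_r(Cl(G)) \cong \bigoplus_i H_r(Cl(G_i))$ for $r \ge 1$, while for $r = 0$ the rank is the total number of connected components. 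Thus Betti numbers add for $r \ge 1$, and the $0$-th Betti number is the sum of the $0$-th Betti numbers of the pieces.

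Next I would assemble the building blocks. For a factor contributing to $b_r$ with $r \ge 1$: by the discussion preceding the corollary, the sphere $\mathbb{S}^r$ is triangulated by the clique complex of a suitable nonempty graph — start from the $4$-cycle, whose clique complex triangulates $\mathbb{S}^1$, and apply $CSusp$ exactly $r-1$ times; call the resulting graph $S_r$. Then $Cl(S_r)$ is a triangulation of $\mathbb{S}^r$, so its reduced Betti numbers are concentrated in degree $r$, where the value is $1$; in particular its (unreduced) $0$-th Betti number is $1$. For a factor contributing to $b_0$, simply take an isolated vertex $K_1$, whose clique complex is a point. Now, given $\sigma = (b_0, \ldots, b_n)$, form $G$ by taking, for each $r \ge 1$, exactly $b_r$ disjoint copies of $S_r$, together with enough isolated vertices to make the total number of connected components equal to $b_0$; this requires $b_0 - \sum_{r \ge 1} b_r$ isolated vertices.

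The main obstacle, and the only subtlety worth flagging, is the bookkeeping for $\beta_0$: the spheres $S_r$ for $r \ge 1$ are themselves connected, so they already contribute to the component count, and we must not double-count. If $b_0 \ge \sum_{r\ge 1} b_r$ this is handled by adding $b_0 - \sum_{r\ge 1}b_r$ isolated vertices as above. If instead $b_0 < \sum_{r\ge 1}b_r$, then no graph has this Betti sequence — a nonempty simplicial complex with a nonzero higher Betti number in degree $r$ has at least one connected component carrying that homology, so $\beta_0 \ge 1$, and more carefully $\beta_0$ is at least the number of components needed to host all the higher homology; however, this case cannot arise under a charitable reading since each connected sphere contributes $1$ to $\beta_0$ and distinct components are disjoint, so in fact any realizable $\sigma$ automatically satisfies $b_0 \ge \sum_{r \ge 1} b_r$ whenever we are free to choose the realization — and one should either restrict the statement to such sequences or simply note that a single connected component can carry higher homology of only one such sphere at a time, so the constraint is forced. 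Assuming the natural reading that $\sigma$ ranges over sequences actually attainable (equivalently, $b_0 \ge 1$ whenever some $b_r > 0$, and we pad with isolated vertices), the construction above works, and the verification of the Betti numbers of $G$ is then immediate from the additivity stated in the first paragraph together with the known Betti numbers of the $S_r$ and of a point.
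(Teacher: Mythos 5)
Your overall strategy (realize each Betti number by a sphere-graph $S_r$ obtained from the $4$-cycle by iterated $CSusp$, then combine the pieces) matches the idea the paper leaves implicit, but the combination step you chose --- disjoint union --- does not work, and the claim you introduce to excuse it is false. With disjoint unions, $\beta_0$ of $Cl(G)$ equals the number of components, so your construction can only realize sequences with $b_0 \ge \sum_{r\ge 1} b_r$. The sequence $\sigma=(1,2)$ already escapes it: you assert that any realizable sequence must satisfy this inequality because ``a single connected component can carry higher homology of only one such sphere at a time'', but a connected complex can have arbitrarily large higher Betti numbers. Indeed, two $4$-cycles sharing one vertex form a connected graph with no $3$-cliques, so its clique complex is the graph itself, a wedge of two circles, with $\beta_0=1$ and $\beta_1=2$. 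Hence $(1,2)$ is realizable, your construction misses it, and the corollary as stated obliges you to handle it.

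The repair is to replace the disjoint union by a one-point union. If $G_1,\dots,G_m$ are graphs glued along a single common vertex $v$ (no other identifications and no new edges), then every clique of the resulting graph lies entirely in one $G_i$, because no edge joins $V(G_i)\setminus\{v\}$ to $V(G_j)\setminus\{v\}$ for $i\ne j$; consequently the clique complex of the glued graph is the wedge of the $Cl(G_i)$ at $v$. Taking $b_r$ copies of $S_r$ for each $r\ge 1$, all wedged at a common vertex, gives a connected graph whose clique complex has $\beta_0=1$ and $\beta_r=b_r$ for $r\ge 1$; then adjoin $b_0-1$ isolated vertices. (There remains the degenerate case $b_0=0$ with some $b_r>0$, which no nonempty complex can realize; that is an imprecision in the statement itself rather than in your argument, and is harmless if one reads the corollary, as intended, for sequences with $b_0\ge 1$, the empty graph covering the all-zero sequence.) The additivity bookkeeping in your first paragraph is fine; it is only the choice of gluing that needs to change.
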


Finally, with the two following proposition we prove how a filtration of simplicial complexes can be associated with a filtration of graphs.

\begin{proposition}\label{clmonot}
If $G$ is a subgraph of $H$, $Cl(G)$ is a subcomplex of $Cl(H)$.
\end{proposition}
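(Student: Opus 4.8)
The plan is to reduce the statement to a one-line transfer argument at the level of simplices. Recall that $G$ being a subgraph of $H$ means $V(G)\subseteq V(H)$ and $E(G)\subseteq E(H)$, and that, by definition, a simplex of $Cl(G)$ is precisely a nonempty set of vertices of $G$ whose induced subgraph is complete.

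First I would fix an arbitrary simplex $\sigma=\langle v_0,\dots,v_k\rangle\in Cl(G)$ and show $\sigma\in Cl(H)$. All the $v_i$ lie in $V(G)\subseteq V(H)$, so $\sigma$ is a set of vertices of $H$. If $k=0$ there is nothing left to check, since single vertices are simplices of $Cl(H)$ by condition (i) in the definition of simplicial complex. If $k\ge 1$, then for every pair of distinct indices $i,j$ the edge $\langle v_i,v_j\rangle$ belongs to $E(G)$, hence to $E(H)$; therefore the subgraph of $H$ induced on $\{v_0,\dots,v_k\}$ is complete, i.e. $\sigma$ is a $(k{+}1)$-clique of $H$, so $\sigma\in Cl(H)$. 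Hence $Cl(G)\subseteq Cl(H)$ as sets of simplices. Since $Cl(G)$ has already been observed to be a simplicial complex in Section~\ref{sec:cli} (so it is closed under passing to nonempty subsets), this set-theoretic inclusion is exactly the statement that $Cl(G)$ is a subcomplex of $Cl(H)$.

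I do not expect a genuine obstacle: the whole content is the monotonicity of "spanning a complete induced subgraph" under the inclusions $V(G)\subseteq V(H)$ and $E(G)\subseteq E(H)$. The only points worth spelling out explicitly are the degenerate case $k=0$ of a single-vertex clique, and the elementary remark that completeness of the induced subgraph on a fixed vertex set is preserved when one only enlarges the edge set — both immediate from $E(G)\subseteq E(H)$.
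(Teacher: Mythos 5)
Your proof is correct and follows exactly the same route as the paper, which simply states that every clique of $G$ is also a clique of $H$; you merely spell out the one-line transfer argument (edges of $G$ are edges of $H$, so completeness of the induced subgraph is preserved) in full detail, including the degenerate single-vertex case. No gaps.
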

\begin{proof}
Every clique of $G$ is also a clique of $H$.
\hfill $\square$
\end{proof}

Let now $(G, f)$ be a weighted graph. We define a filtering function $f_{Cl}: Cl(G) \to \mathbb{R}$ as follows:
\begin{itemize}
\item for every 0-simplex $\sigma=\langle v \rangle$, $f_{Cl}(\sigma)$ is the minimum value of $f$ on the edges incident on $v$;
\item for every $k$-simplex $\sigma$ ($k\ge 1$), i.e. for every $(k+1)$-clique, $f_{Cl}(\sigma)$ is the maximum value of $f$ on the edges of the induced complete subgraph.
\end{itemize}

\begin{figure}[htb]
\centering
\begin{minipage}[ht]{0.3\linewidth}
\centering
\includegraphics[width=1\linewidth]{filtrazione7.pdf}
\end{minipage}
\hfill
\begin{minipage}[ht]{0.33\linewidth}
\centering
\includegraphics[width=1\linewidth]{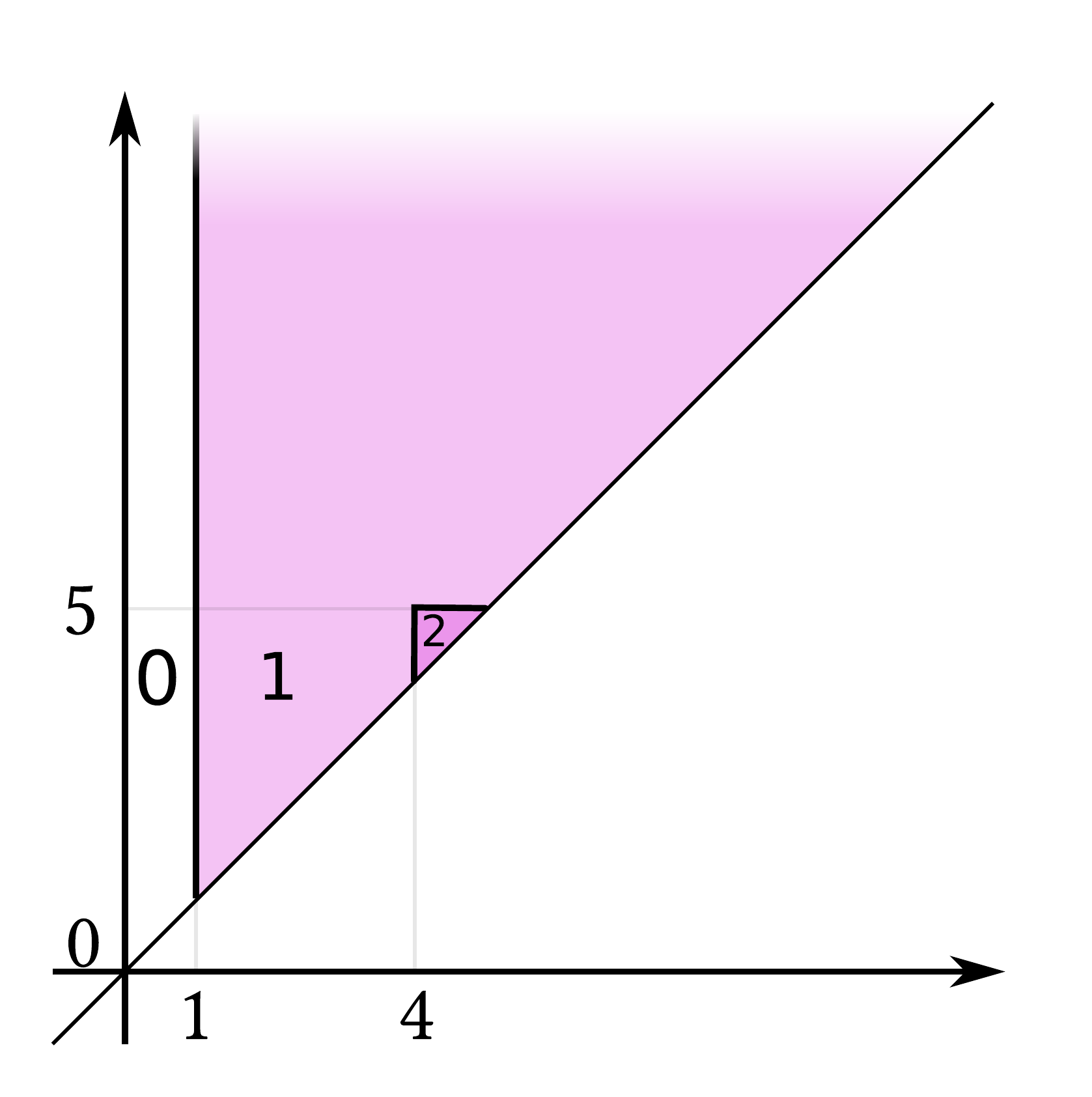}
\end{minipage}
\hfill
\begin{minipage}[ht]{0.33\linewidth}
\centering
\includegraphics[width=1\linewidth]{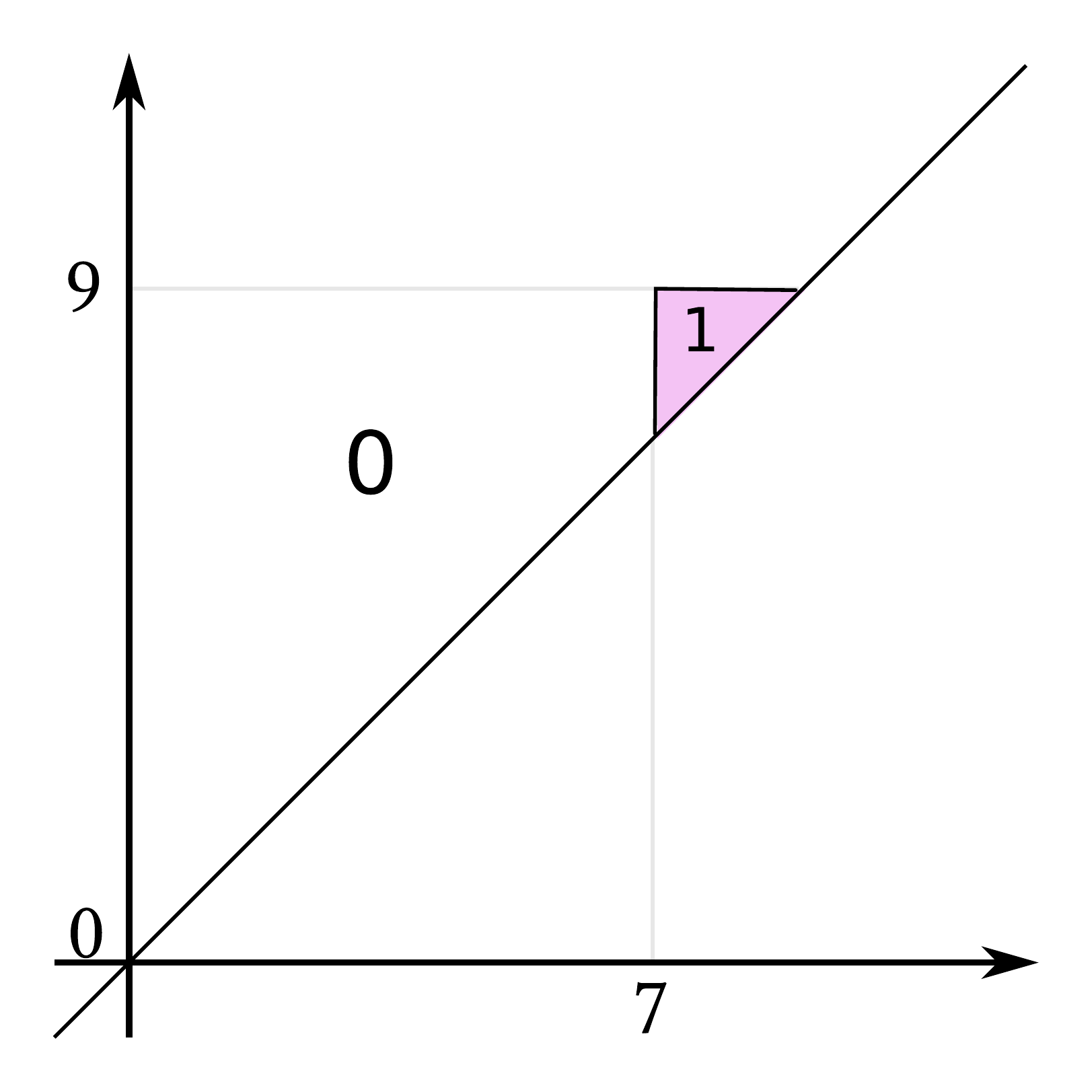}
\end{minipage}
\caption{From left to right: The same weighted graph, the 0-PBN and its 1-PBN functions of its clique complex.}\label{fig:clbetti}
\end{figure}

\begin{proposition}
$\big(Cl(G), f_{Cl}\big)$ is a filtered complex.
\end{proposition}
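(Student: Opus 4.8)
The plan is to verify directly that $f_{Cl}$ is a \emph{monotone} (order-preserving) function on the poset of simplices of $Cl(G)$ ordered by inclusion; this is precisely the condition needed for the sublevel sets $Cl(G)_u = f_{Cl}^{-1}((-\infty,u])$ to be subcomplexes, which is the definition of a filtered complex. Concretely, I must show that whenever $\tau$ is a face of a simplex $\sigma$ in $Cl(G)$, one has $f_{Cl}(\tau)\le f_{Cl}(\sigma)$.

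The argument splits into cases according to the dimension of $\tau$. If $\tau=\langle v\rangle$ is a $0$-simplex and $\sigma$ is a $k$-clique with $k\ge 2$ containing $v$, then by definition $f_{Cl}(\tau)$ is the minimum of $f$ over edges incident on $v$; since $v$ lies in the clique $\sigma$, at least one edge of the induced complete subgraph of $\sigma$ is incident on $v$, and that edge's weight is $\ge f_{Cl}(\tau)$ while being $\le f_{Cl}(\sigma)$ (the maximum over all edges of $\sigma$), giving $f_{Cl}(\tau)\le f_{Cl}(\sigma)$. If instead both $\tau$ and $\sigma$ have dimension $\ge 1$, then the edge set of the complete subgraph induced by $\tau$ is a subset of that induced by $\sigma$, so the maximum defining $f_{Cl}(\tau)$ is taken over a smaller set than the one defining $f_{Cl}(\sigma)$, whence $f_{Cl}(\tau)\le f_{Cl}(\sigma)$. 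The remaining potential case, $\tau$ a $0$-simplex and $\sigma$ a $1$-simplex $\langle v,w\rangle$, is really the first case with $k=2$: the single edge of $\sigma$ is incident on $v$, so $f_{Cl}(\langle v\rangle)\le f(\langle v,w\rangle)=f_{Cl}(\sigma)$.

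Once monotonicity is established, I would conclude as follows: for each $u\in\R$ the set $Cl(G)_u$ of simplices with $f_{Cl}$-value at most $u$ is downward closed under taking faces (by monotonicity) and hence is a simplicial subcomplex of $Cl(G)$; moreover $u\le u'$ implies $Cl(G)_u\subseteq Cl(G)_{u'}$, so these subcomplexes form a filtration, and since $G$ is finite there are only finitely many distinct ones. This is exactly what it means for $\big(Cl(G),f_{Cl}\big)$ to be a filtered complex.

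I do not anticipate a genuine obstacle here — the statement is essentially a bookkeeping verification — but the one point that needs a little care is the mixed case where a vertex is compared with a higher-dimensional clique containing it: there one must remember that $f_{Cl}$ on the vertex is defined by a \emph{minimum} (over possibly many incident edges, not all of which lie in the clique) while on the clique it is a \emph{maximum}, and the inequality holds because the clique supplies at least one witnessing edge through that vertex. Handling the degenerate clique sizes ($k=1$ versus $k\ge 2$) consistently is the only subtlety worth spelling out.
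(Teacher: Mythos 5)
Your proof is correct and follows the same route as the paper, which simply asserts that by construction the value of every simplex is greater than or equal to the value of each of its faces; you have merely spelled out the case analysis (vertex inside a clique via a witnessing incident edge, and nested cliques via inclusion of edge sets) that the paper leaves implicit. No discrepancy to report.
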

\begin{proof}
By construction, the value of every simplex is $\ge$ the value of each of its faces.
\hfill $\square$
\end{proof}

Fig.~\ref{fig:clbetti} shows the PBN's of the filtered clique complex of the already seen weighted graph of Fig.~\ref{fig:betti}. Of course, the conveyed information is totally different.

\subsection{Complex of neighborhoods}\label{neigh}

In a graph $G=(V, E)$, given $v\in V$, its {\em neighborhood} in $G$ is the set $N_G(v)=\{v\}\cup \{u\in V \, | \, \langle v, u \rangle \in E\}$. Given a graph $G$, the set $Nb(G)$ of all nonempty subsets of neighborhoods of vertices of $G$ is a simplicial complex. We refer the reader to \cite{Lo78,MaRa*08} for the proof of this claim.

Not all simplicial complexes can be obtained by considering the complex of neighborhoods of a graph. Consider, for instance, the boundary of a triangle. Furthermore, the barycentric subdivision strategy---that has proved to be successful in Sections.~\ref{sec:cli} and~\ref{sec:ind}---does not work in the neighborhoods' framework. Despite these issues, we repute the complex of neighborhoods to be a valuable construction: By definition $Nb(G)$ diverges greatly from the topology of $G$ as a simplicial complex, thereby revealing novel information about the combinatorics of $G$. Fig.~\ref{ntop} shows how trivially homeomorphic cycles give rise to nonhomotopic complexes of neighborhoods. Moreover, as stressed in \cite{MaRa*08,HoMa*09}, this construction seems to be a precious tool for the analysis of complex networks.

\begin{figure}[tb]
\centering
\includegraphics[scale=0.4]{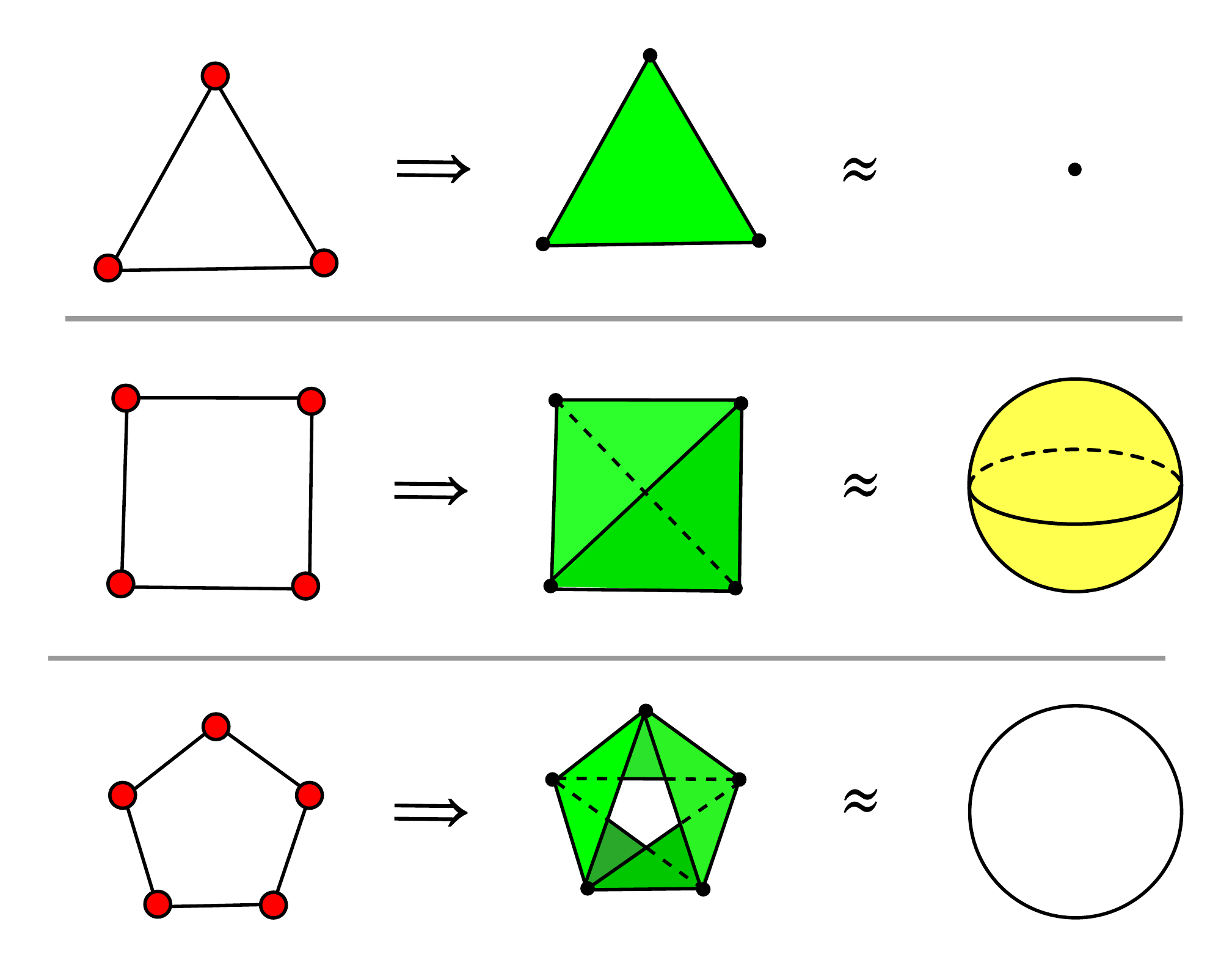}
\caption{Nonhomotopic complexes of neighborhoods of three cycles.}\label{ntop}
\end{figure}

\begin{proposition}\label{nbmonot}
If $G$ is a subgraph of $H$, $Nb(G)$ is a subcomplex of $Nb(H)$.
\end{proposition}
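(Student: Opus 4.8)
The plan is to reduce the whole statement to a single containment between vertex neighborhoods. First I would record that, since $G$ is a subgraph of $H$ in the sense of Subsection~\ref{graphs} (that is, $V(G)\subseteq V(H)$ and $E(G)\subseteq E(H)$, exactly as for the subcomplex relation), one has $N_G(v)\subseteq N_H(v)$ for every $v\in V(G)$: indeed $\{v\}\subseteq\{v\}$ trivially, and $\{u \mid \langle v,u\rangle\in E(G)\}\subseteq\{u \mid \langle v,u\rangle\in E(H)\}$ because passing to the larger graph can only add edges incident on $v$, never remove them. Note this inclusion goes the ``right'' way for monotonicity.

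Next I would simply unwind the definition of $Nb$. A simplex $\tau$ of $Nb(G)$ is, by definition, a nonempty subset of $N_G(v)$ for some $v\in V(G)$; combining this with the previous inclusion yields $\tau\subseteq N_G(v)\subseteq N_H(v)$ with $v\in V(G)\subseteq V(H)$, so $\tau$ is a nonempty subset of a neighborhood of a vertex of $H$, i.e. a simplex of $Nb(H)$. Hence $Nb(G)\subseteq Nb(H)$ as sets of simplices. Since $Nb(G)$ is itself a simplicial complex (the claim recalled above, for which we cited \cite{Lo78,MaRa*08}), this containment is precisely the assertion that $Nb(G)$ is a subcomplex of $Nb(H)$.

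I do not expect a genuine obstacle: the argument is as short as the proof of Proposition~\ref{clmonot}. The only point worth a moment's care is the direction of the neighborhood inclusion --- enlarging $G$ enlarges neighborhoods, so $Nb$ is monotone under subgraph inclusion, in contrast with, e.g., the independent-set construction of Subsection~\ref{sec:ind}, where enlarging the edge set behaves oppositely. It is exactly this monotonicity that will let a filtration of graphs be turned into a filtration of neighborhood complexes once the appropriate filtering function on $Nb(G)$ is introduced.
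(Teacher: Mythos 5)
Your argument is correct and coincides with the paper's own proof, whose single line is exactly your key observation that $N_G(v)\subseteq N_H(v)$ for every vertex $v$ of $G$; you merely spell out the routine unwinding of the definition of $Nb$ that the paper leaves implicit. No issues.
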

\begin{proof}
The neighborhood of every vertex $v$ of $G$ is a subset of the neighborhood of $v$ in $H$.
\hfill $\square$
\end{proof}

Let now $(G, f)$ be a weighted graph. We define a filtering function $f_{Nb}: Nb(G) \to \mathbb{R}$ as follows:
\begin{itemize}
\item for every 0-simplex $\sigma=\langle v \rangle$, $f_{Nb}(\sigma)$ is the minimum value of $f$ on the edges incident on $v$;
\item calling $G_t$ the subgraph induced by all edges $e$ with $f(e) \le t$, for a $k$-simplex $\sigma$, $k\ge 1$, we set $f_{Nb}(\sigma)=\overline{t}$, where $\overline{t}$ is the smallest value of $t$ for which $\sigma$ is a subset of a neighborhood in $G_t$.
\end{itemize}

% \begin{figure}[htb]
% \centering
% \begin{minipage}[ht]{0.35\linewidth}
% \centering
% \includegraphics[width=1\linewidth]{filtrazione7.pdf}
% \end{minipage}
% \hfill
% %
% \begin{minipage}[ht]{0.4\linewidth}
% \centering
% \includegraphics[width=1\linewidth]{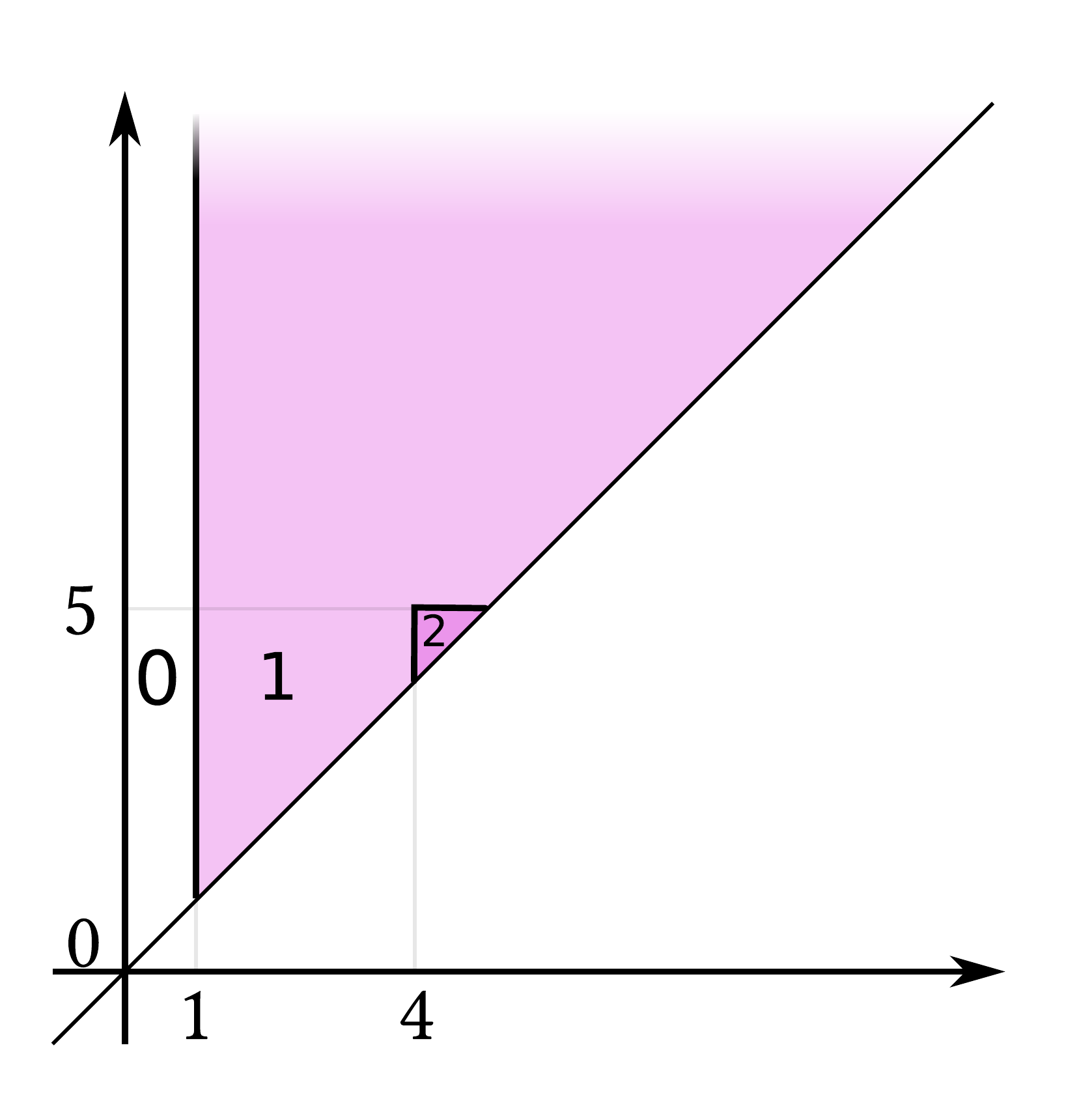}
% \end{minipage}
% \hfill
% %
% \caption{From left to right: The weighted graph and the 0-PBN of its neighborhood complex (1-PBN is trivial throughout.}\label{fig:nbbetti}
% \end{figure}

\begin{proposition}
$\big(Nb(G), f_{Nb}\big)$ is a filtered complex.
\end{proposition}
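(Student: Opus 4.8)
The plan is to show that $f_{Nb}$ is monotone on faces: whenever $\tau$ is a face of a simplex $\sigma$ in $Nb(G)$, then $f_{Nb}(\tau) \le f_{Nb}(\sigma)$. This is exactly the defining property of a filtered complex, since the sublevel sets $\{\sigma \mid f_{Nb}(\sigma) \le t\}$ are then subcomplexes, and they clearly form a nested family as $t$ grows. So the whole argument reduces to a case analysis on the dimensions of $\tau$ and $\sigma$.

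First I would treat the case where both $\tau$ and $\sigma$ have dimension $\ge 1$. Write $f_{Nb}(\sigma) = \overline{t}$, so $\sigma$ is contained in some neighborhood $N_{G_{\overline t}}(w)$. Since $\tau \subseteq \sigma$, we also have $\tau \subseteq N_{G_{\overline t}}(w)$, hence the smallest $t$ that works for $\tau$ is at most $\overline t$, i.e. $f_{Nb}(\tau) \le f_{Nb}(\sigma)$. The second case is when $\tau = \langle v\rangle$ is a $0$-simplex and $\sigma$ (with $\dim\sigma \ge 1$) contains it. Here I need: the minimum weight of an edge incident on $v$ is $\le f_{Nb}(\sigma) = \overline t$. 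Since $v \in \sigma \subseteq N_{G_{\overline t}}(w)$, either $v = w$ or $\langle v, w\rangle \in E(G_{\overline t})$. In the latter case $\langle v,w\rangle$ is an edge incident on $v$ with weight $\le \overline t$, so the minimum is $\le \overline t$. In the former case $v = w$, and since $\dim \sigma \ge 1$ there is another vertex $u \in \sigma$ with $\langle w,u\rangle = \langle v,u\rangle \in E(G_{\overline t})$, again an edge incident on $v$ of weight $\le \overline t$; so the minimum is $\le \overline t$ in all subcases. The remaining case, $\tau$ a $0$-simplex and $\sigma$ a $0$-simplex, is vacuous (a vertex has no proper nonempty faces), and the case of $\sigma$ being a vertex containing a lower face does not arise.

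The one point that needs a little care — and the only place I see a potential gap — is the well-definedness of $\overline{t}$ for simplices of dimension $\ge 1$: we must know that every $\sigma \in Nb(G)$ is actually a subset of some neighborhood in $G_t$ for $t$ large enough. This holds because $G_t = G$ once $t \ge \max_{e} f(e)$, and $\sigma \in Nb(G)$ means precisely that $\sigma$ is contained in a neighborhood of $G$; so the set of valid $t$ is nonempty and, being a finite set of edge-weight thresholds (the value changes only at the finitely many weights), attains its minimum. I would state this observation explicitly before the case analysis so that $f_{Nb}$ is genuinely a function $Nb(G)\to\mathbb{R}$. With monotonicity on faces established, the conclusion that $(Nb(G), f_{Nb})$ is a filtered complex is immediate, so no further work is needed. \hfill $\square$
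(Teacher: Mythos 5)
Your argument is correct and follows essentially the same route as the paper, which simply asserts that ``by construction, the value of every simplex is greater than or equal to the value of each of its faces''; you supply the case analysis and the well-definedness of $\overline{t}$ that the paper leaves implicit. In particular your treatment of the $0$-simplex face case (distinguishing $v=w$ from $v$ adjacent to $w$ in $G_{\overline{t}}$) is exactly the verification needed to justify the paper's one-line claim.
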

\begin{proof}
By construction, the value of every simplex is greater than or equal to the value of each of its faces.
\hfill $\square$
\end{proof}

% Fig.~\ref{fig:nbbetti} shows the PBN's of the filtered neighborhood complex of the same weighted graph of Fig.~\ref{fig:betti}.

As an example, the neighborhood complex of the usual weighted graph of Figs.~\ref{fig:betti} and \ref{fig:clbetti} has the same 0-PBN function as the clique complex, but trivial 1-PBN.

\subsection{Complex of enclaveless sets}

Separation is as important as closeness in a network, and this is well represented by the duality clique/independent set. Somehow, there is a concept that merges these two aspects: In a graph $G=(V, E)$ a set $X\subseteq V$ is said to be {\em dominating} if every vertex of $G$ belongs either to $X$ or is adjacent to at least one of its vertices. 

Unfortunately, the inheritance property (ii) of simplicial complexes does not hold for dominating sets; on the contrary, every superset of a dominating set is dominating. So we turn to their complementary sets. A set $Y\subseteq V$ is said to be {\em enclaveless} if for no $v\in Y$ we have $N(v)\subseteq Y$. We observe that, given a graph $G$, the set $El(G)$ of all its enclaveless sets is a simplicial complex.

\begin{figure}[tb]
\centering
\includegraphics[scale=0.7]{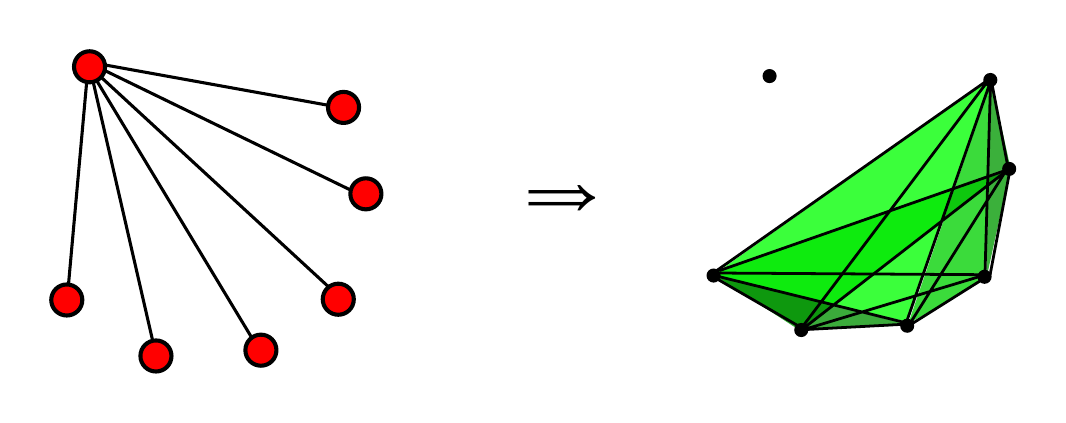}
\caption{The smallest graph containing an enclaveless set of cardinality 6 and the corresponding complex.}\label{elsimp}
\end{figure}

Not all simplicial complexes can be obtained as complex of enclaveless sets of a graph: e.g. the one formed by a single $n$-simplex and its faces; the smallest $G$ such that $El\left(G\right)$ contains an $n$-simplex must also contain an extra point. See Fig.~\ref{elsimp} for $n=5$. Still, spheres of any dimension and suspensions can be obtained.

If $K= El\left(G\right)$, then its suspension $\Sigma(K)$ is the complex of enclaveless sets of
$$ElSusp(G) = \big(V(G)\cup \{x, y\}, E(G) \cup \{\langle x, y\rangle \}\big)$$
where $x, y \not\in V(G)$. It turns out to be the same construction as $ISusp$ for the complex of independent sets (see Sect.~\ref{sec:ind}).
Spheres can be built also in another way:

\begin{proposition}
For $n\ge 2$ the space of $El(K_n)$ is homeomorphic to $\mathbb{S}^{n-2}$
\end{proposition}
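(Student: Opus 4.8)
The plan is to identify explicitly the complex $El(K_n)$ and recognize its space as a well-known triangulation of a sphere. First I would determine which subsets $Y \subseteq V(K_n)$ are enclaveless. In the complete graph $K_n$, the neighborhood of any vertex $v$ is all of $V(K_n)$, so for $v \in Y$ the condition $N(v) \subseteq Y$ is equivalent to $Y = V(K_n)$. Hence a set $Y$ fails to be enclaveless precisely when $Y = V(K_n)$ (and $Y$ is nonempty); every proper nonempty subset of $V(K_n)$ is enclaveless. Therefore $El(K_n)$ consists of all nonempty proper subsets of an $n$-element set — that is, $El(K_n)$ is the boundary complex $\partial \Delta^{n-1}$ of the $(n-1)$-simplex on $n$ vertices.

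The second step is purely topological: the boundary of an $(n-1)$-simplex is a triangulation of $\mathbb{S}^{n-2}$. This is standard (see \cite{Sp94,Ha02}); concretely, $|\partial \Delta^{n-1}|$ is the boundary of a convex $(n-1)$-dimensional polytope, hence homeomorphic to $\mathbb{S}^{n-2}$ via radial projection from an interior point. Combining the two steps gives that the space of $El(K_n)$ is homeomorphic to $\mathbb{S}^{n-2}$, as claimed. The hypothesis $n \ge 2$ is exactly what is needed for $\partial\Delta^{n-1}$ to be nonempty (for $n=2$ it is two points, i.e.\ $\mathbb{S}^0$).

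I expect the only point requiring care — and it is a minor one — is the verification that $N(v) \subseteq Y$ can hold \emph{only} when $Y$ is the whole vertex set, which hinges on the fact that in $K_n$ one has $v \in N(v)$ and $N(v) = V(K_n)$ for every vertex; this makes the enclaveless condition collapse to "$Y \ne V(K_n)$." Once that observation is in place, there is no real obstacle: the rest is the textbook identification of $\partial\Delta^{n-1}$ with a sphere. $\square$
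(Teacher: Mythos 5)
Your proof is correct and reaches the statement by essentially the same route as the paper: both identify $El(K_n)$ as the boundary complex of the $(n-1)$-simplex and then invoke the standard homeomorphism $|\partial\Delta^{n-1}|\cong\mathbb{S}^{n-2}$. The only cosmetic difference is that the paper detects the maximal enclaveless sets as complements of the minimal dominating sets (singletons), whereas you verify the enclaveless condition directly from $N(v)=V(K_n)$; the two observations are equivalent.
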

\begin{proof}
Minimal dominant sets in a complete graph $K_n$ are all singletons; so the maximal enclaveless sets are all sets of $n-1$ vertices; they together form the boundary of an $(n-1)$-simplex, {while the $(n-1)$-simplex itself} is not present in $El(K_n)$. 
\hfill $\square$
\end{proof}

\begin{proposition}\label{elmonot}
If $G$ is a subgraph of $H$, $El(G)$ is a subcomplex of $El(H)$.
\end{proposition}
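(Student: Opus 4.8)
The plan is to prove the inclusion directly at the level of simplices: I will show that every simplex of $El(G)$ is also a simplex of $El(H)$. Since $V(G)\subseteq V(H)$, this is exactly what it means for $El(G)$ to be a subcomplex of $El(H)$, and the face condition (ii) and the vertex condition (i) that make $El(H)$ a simplicial complex are already known, so no extra bookkeeping is needed there.

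The key observation is the same one that drives Proposition~\ref{nbmonot}: for every vertex $v\in V(G)$ one has $N_G(v)\subseteq N_H(v)$. Indeed, $v$ lies in both neighborhoods by definition, and if $u$ is adjacent to $v$ in $G$ then $\langle v,u\rangle\in E(G)\subseteq E(H)$, so $u\in N_H(v)$ as well. This inclusion of neighborhoods is the only structural fact I will use.

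From here the argument is a one-line contrapositive. Let $Y$ be a simplex of $El(G)$, i.e.\ a nonempty enclaveless subset of $V(G)$; note $Y\subseteq V(G)\subseteq V(H)$, so it makes sense to read $Y$ as a set of vertices of $H$. Fix $v\in Y$. Enclavelessness of $Y$ in $G$ gives $N_G(v)\not\subseteq Y$; combining this with $N_G(v)\subseteq N_H(v)$ forces $N_H(v)\not\subseteq Y$. As $v\in Y$ was arbitrary, $Y$ is enclaveless in $H$, hence a simplex of $El(H)$.

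I do not expect a genuine obstacle: this is the enclaveless analogue of Propositions~\ref{clmonot} and~\ref{nbmonot}. The only subtlety worth flagging is the direction of the neighborhood inclusion and why it still yields monotonicity here: ``enclaveless'' is a universally quantified \emph{negative} condition ($N(v)\not\subseteq Y$ for all $v\in Y$), so enlarging the ambient graph enlarges each $N(v)$ and can only make that condition easier to satisfy; thus the property is inherited by subgraphs rather than destroyed by passing to them.
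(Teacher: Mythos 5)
Your proof is correct and complete, but it takes a genuinely different route from the paper's. The paper's one-line argument appeals to the complementary notion of dominating sets (``every dominating set of $H$ is also a dominating set of $G$''), implicitly invoking the fact that $Y$ is enclaveless exactly when its complement in the vertex set is dominating. You instead unpack the definition of enclavelessness directly and reduce everything to the single monotonicity fact $N_G(v)\subseteq N_H(v)$, exactly as in Proposition~\ref{nbmonot}. Your version is arguably the safer of the two: the dominating-set formulation requires care about which vertex set the complement is taken in when $V(G)\subsetneq V(H)$, and the auxiliary claim as literally stated is delicate (a dominating set of $H$ need not remain dominating in $G$, since domination may be achieved through vertices or edges of $H$ that are absent from $G$; take $H=K_2$ and $G$ its edgeless spanning subgraph). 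Your direct argument sidesteps all of this and makes the real mechanism explicit --- enclavelessness is a universally quantified negative condition on neighborhoods, which can only become easier to satisfy as neighborhoods grow --- so it delivers the same monotonicity needed for the filtration $f_{El}$ while being more self-contained.
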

\begin{proof}
Every dominating set of $H$ is also a dominating set of $G$.
\hfill $\square$
\end{proof}

Let now $\big(G=(V, E), f\big)$ be a weighted graph. We define a filtering function $f_{El}: El(G) \to \mathbb{R}$ as follows:
\begin{itemize}
\item for every 0-simplex $\sigma=\langle v \rangle$, $f_{El}(\sigma)$ is the minimum value of $f$ on the edges incident on $v$;
\item calling $G_t$ the subgraph induced by all edges $e$ with $f(e) \le t$, for a $k$-simplex $\sigma$, $k\ge 1$, we set $f_{El}(\sigma)=\overline{t}$, where $\overline{t}$ is the smallest value of $t$ for which $\sigma$ is a subset of an enclaveless set in $G_t$.
\end{itemize}

\begin{proposition}
$\big(El(G), f_{El}\big)$ is a filtered complex.
\end{proposition}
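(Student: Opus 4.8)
The plan is to verify directly the defining condition of a filtered complex: that $f_{El}(\tau)\le f_{El}(\sigma)$ whenever $\tau$ is a nonempty face of a simplex $\sigma\in El(G)$. First I would check that $f_{El}$ makes sense. A vertex $v$ contained in an enclaveless set must have a neighbour, since otherwise $N(v)=\{v\}$ would be contained in that set; hence for every $0$-simplex $\langle v\rangle$ of $El(G)$ the minimum of $f$ over the edges incident on $v$ is taken over a nonempty set. For $k\ge 1$, the set of thresholds $t$ for which $\sigma$ is contained in some enclaveless set of $G_t$ is nonempty — it contains $\max_{e\in E}f(e)$, because $\sigma$ is itself an enclaveless set of $G=G_{\max f}$ — and, since $G_t$ takes only finitely many values as $t$ varies and $El(G_t)$ is a subcomplex of $El(G_{t'})$ whenever $t\le t'$ (Proposition~\ref{elmonot}), this set of thresholds is upward closed and attains its minimum $\overline t$.

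Now fix $\sigma\in El(G)$ and a nonempty face $\tau\subseteq\sigma$. If $\dim\sigma=0$ there is nothing to prove. Otherwise put $\overline t=f_{El}(\sigma)$ and choose an enclaveless set $Y$ of $G_{\overline t}$ with $\sigma\subseteq Y$. If $\dim\tau\ge 1$, then $\tau\subseteq\sigma\subseteq Y$ exhibits $\overline t$ as an admissible threshold for $\tau$, so $f_{El}(\tau)\le\overline t=f_{El}(\sigma)$ at once.

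The only case that needs a genuine argument is $\tau=\langle v\rangle$ with $v\in\sigma$, where the two separate clauses defining $f_{El}$ must be reconciled. Here I would use that $Y$ is enclaveless in $G_{\overline t}$ and $v\in Y$: this forces $N_{G_{\overline t}}(v)\not\subseteq Y$, so there is a vertex $u\in N_{G_{\overline t}}(v)\setminus Y$. Since $v\in Y$ we have $u\ne v$, hence $\langle u,v\rangle$ is an edge of $G_{\overline t}$, i.e.\ $f(\langle u,v\rangle)\le\overline t$. Therefore $f_{El}(\langle v\rangle)$, being the minimum of $f$ over the edges incident on $v$, is at most $f(\langle u,v\rangle)\le\overline t=f_{El}(\sigma)$, which finishes the verification.

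The step I expect to be the main obstacle is exactly this last one. The value attached to a $0$-simplex is defined by a rule (smallest incident weight) formally unrelated to the threshold rule governing positive-dimensional simplices, and the point is that enclavelessness of a positive-dimensional simplex inside $G_{\overline t}$ compels every one of its vertices to have, already in $G_{\overline t}$, a neighbour lying outside the simplex; the edge to that neighbour then certifies a sufficiently small incident weight. Everything else — the two remaining cases and the well-definedness of $\overline t$ — reduces to Proposition~\ref{elmonot} together with the fact that a subset of an enclaveless set is enclaveless, which is what makes $El(\cdot)$ a simplicial complex in the first place.
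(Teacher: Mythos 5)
Your proof is correct and follows the same direct-verification route as the paper, whose entire proof is the one-line assertion that ``by construction, the value of every simplex is greater than or equal to the value of each of its faces.'' The only point where anything actually needs checking is the one you isolate---the comparison between a positive-dimensional simplex and its vertices, whose values are defined by a different clause---and your argument there (a vertex $v$ of an enclaveless set $Y$ of $G_{\overline{t}}$ has a neighbour in $G_{\overline{t}}$ outside $Y$, hence is incident to an edge of weight at most $\overline{t}$) is sound, as are your preliminary observations that the threshold set for a positive-dimensional simplex is nonempty, upward closed and attains its minimum.
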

\begin{proof}
By construction, the value of every simplex is greater than or equal to the value of each of its faces.
\hfill $\square$
\end{proof}

\begin{figure}[htb]
\centering
\begin{minipage}[ht]{0.17\linewidth}
\centering
\includegraphics[width=1\linewidth]{filtrazione7.pdf}
\end{minipage}
\hfill
\begin{minipage}[ht]{0.26\linewidth}
\centering
\includegraphics[width=1\linewidth]{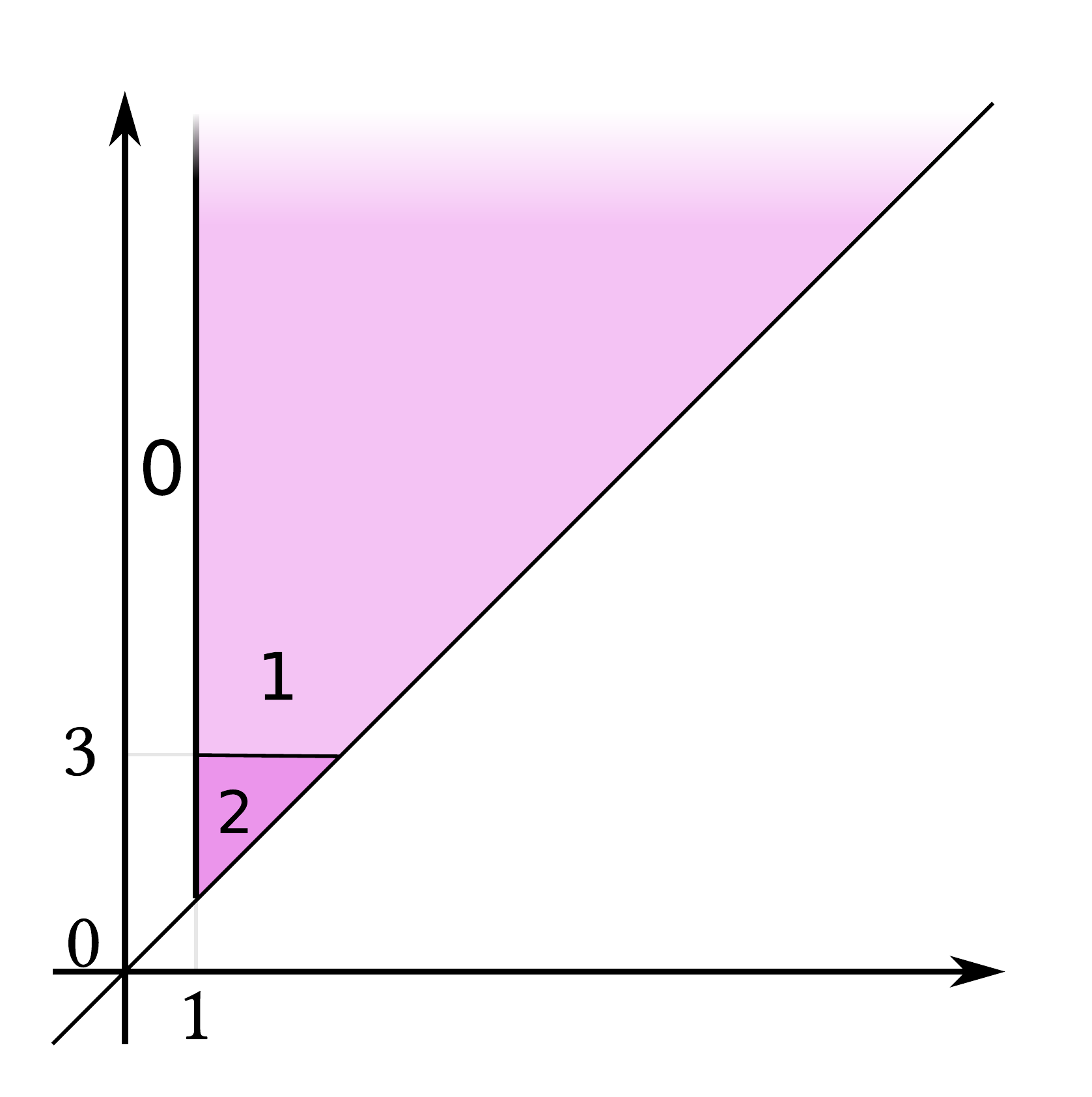}
\end{minipage}
\hfill
\begin{minipage}[ht]{0.26\linewidth}
\centering
\includegraphics[width=1\linewidth]{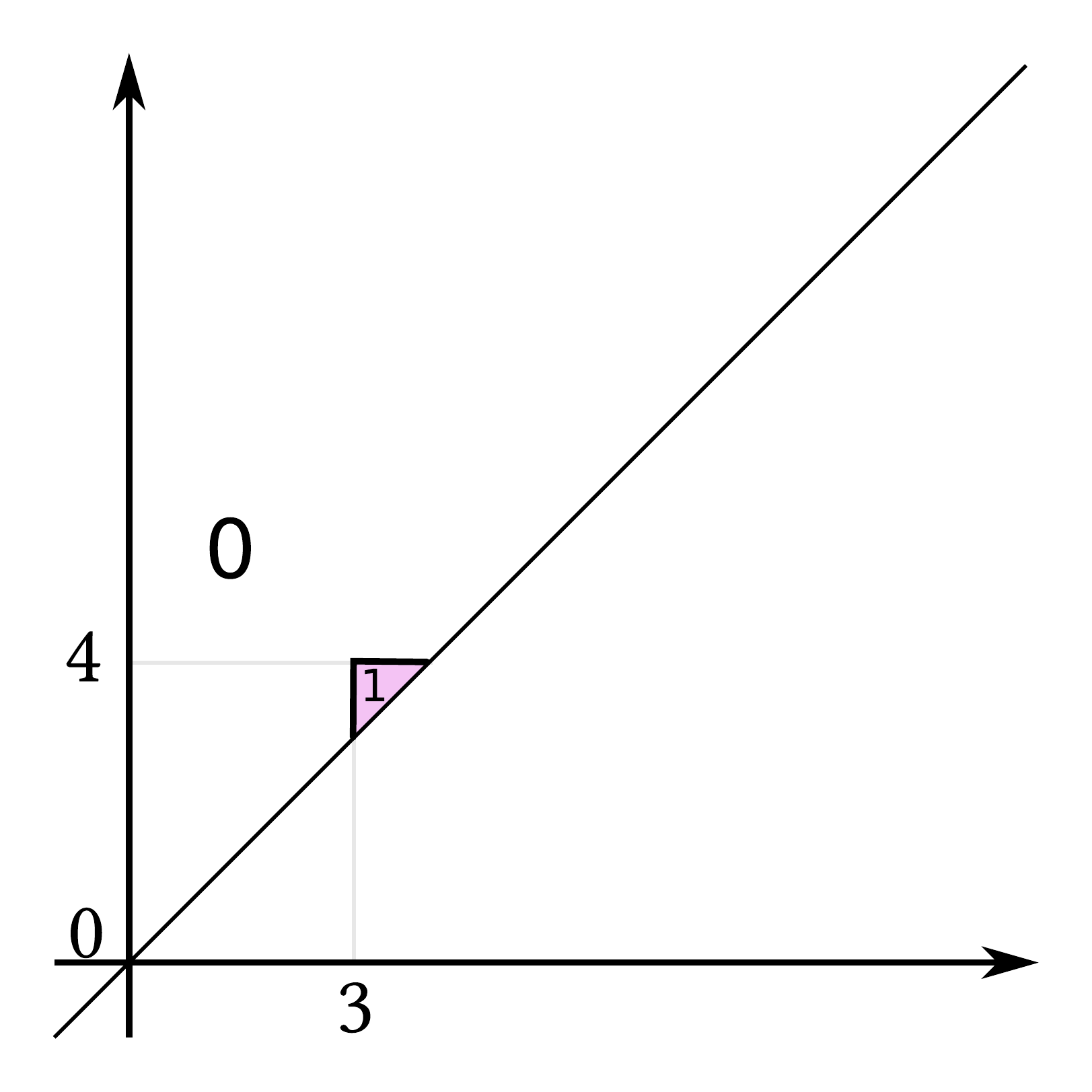}
\end{minipage}
\hfill
\begin{minipage}[ht]{0.26\linewidth}
\centering
\includegraphics[width=1\linewidth]{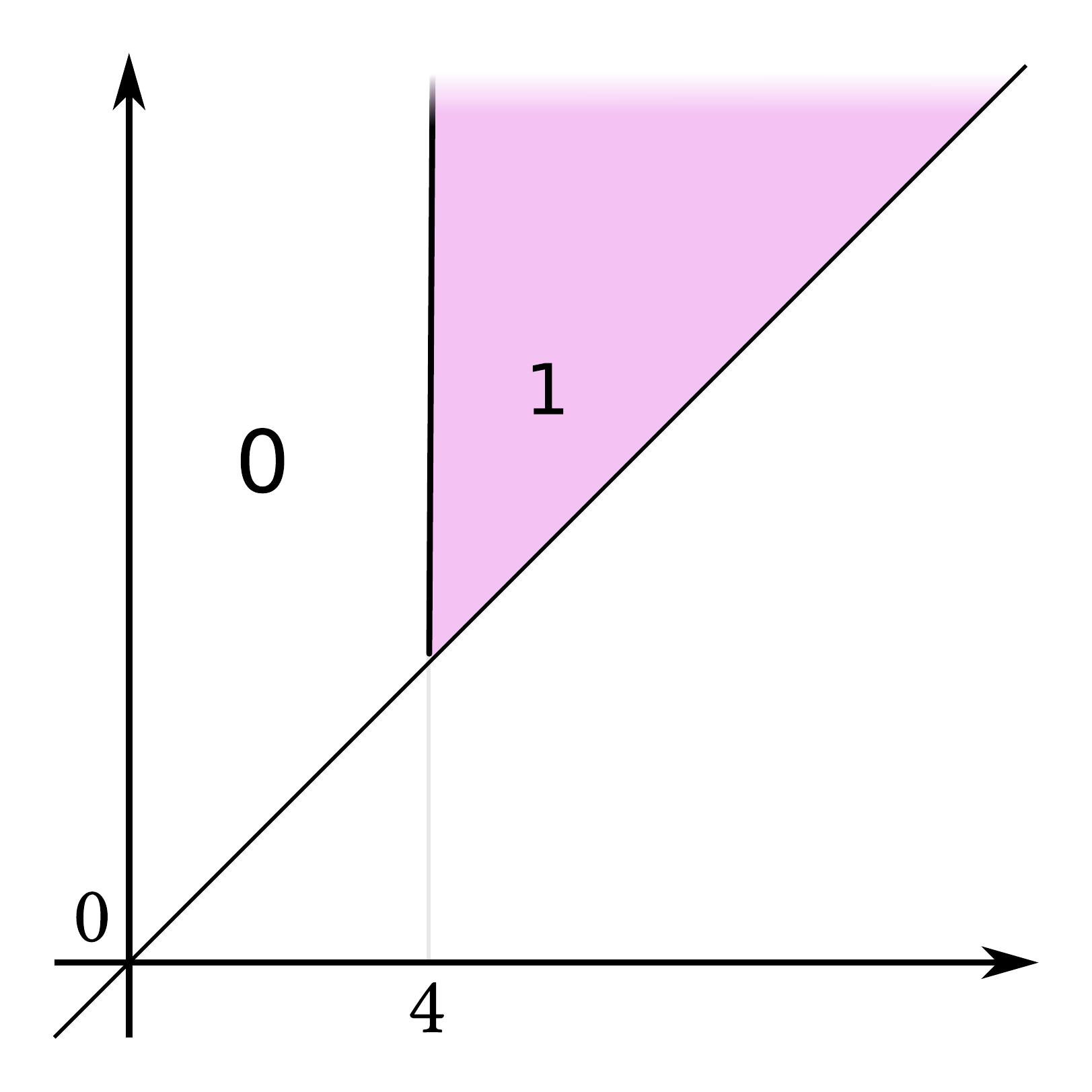}
\end{minipage}
\caption{From left to right: The same weighted graph, the 0-PBN, the 1-PBN and the 2-PBN functions of its complex of enclaveless sets.}\label{fig:enbetti}

\end{figure}

Fig.~\ref{fig:enbetti} shows the PBN functions for the complex of enclaveless sets of the weighted graph of Figs.~\ref{fig:betti} and \ref{fig:clbetti}. Here we have even a non trivial function in dimension 2.

\subsection{Complex of independent sets}\label{sec:ind}

An {\em independent} (or {\em stable}) nonempty set in a graph is a set of vertices such that the induced subgraph does not contain any edge. Recall that, given a graph $G=(V, E)$, its {\em complement} is the graph $G^c = \left(V, E'\right)$ where for all $u, v \in V, \ u\neq v$, $\langle u, v \rangle \in E'$ if and only if $\langle u, v \rangle \not\in E$; i.e. it has the same vertex set as $G$ and its edge set is complementary to $E$ with respect to the complete graph with the same vertices. Then a set of vertices is independent in $G$ if and only if it is a clique in $G^c$ and conversely. 

Even in this case, we observe that given a graph $G$, the set $I(G)$ of its independent sets is a simplicial complex. Again, not every simplicial complex can be described as the complex of independent sets of a graph. The same barycentric subdivision strategy also applies in this framework. For any simplicial complex $K$, let $K'$ be its barycentric subdivision. Then for the graph $G=(K')^1$ (1-skeleton of $K'$) we have that $I(G^c)$ is isomorphic to $K'$ and $|I(G^c)|$ is homeomorphic to $|K|$.

If $K= I(G)$, then its suspension $\Sigma(K)$ is the complex of independent sets of
$$ISusp(G) = \big(V(G)\cup \{x, y\}, E(G) \cup \{\langle x, y\rangle \}\big)$$
where $x, y \not\in V(G)$. I.e. $ISusp(G)$ is the graph obtained from $G$, by adding a component formed by a single edge. Note that $ISusp(G)$ is \textit{not} the suspension of $G$.
In addition, a sphere of any dimension can be triangulated by a suitable $I(G)$. Thus, For any finite sequence $\sigma$ of nonnegative integers, there exists a graph $G$ such that $\sigma$ is the sequence of Betti numbers of $I(G)$.

The sort of duality between cliques and independent sets implies that the monotonically increasing correspondence of Prop.~\ref{clmonot} becomes decreasing here:

\begin{proposition}\label{imonot}
If $G$ is a subgraph of $H$, $I(H)$ is a subcomplex of $I(G)$.
\hfill $\square$
\end{proposition}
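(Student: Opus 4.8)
The plan is to prove Proposition~\ref{imonot} directly from the definitions, mirroring the trivial proofs of Propositions~\ref{clmonot}, \ref{nbmonot} and \ref{elmonot}, but keeping track of the reversal of the inclusion. First I would unwind what it means for $G$ to be a subgraph of $H$ in our setting: since graphs here are $1$-dimensional simplicial complexes, a subgraph has $V(G)\subseteq V(H)$ and $E(G)\subseteq E(H)$. (If one wants the statement to hold verbatim, one should really assume $V(G)=V(H)$ and $E(G)\subseteq E(H)$, i.e. a \emph{spanning} subgraph; I would note this, since otherwise a vertex of $H\setminus G$ could never appear in $I(G)$. Given how Prop.~\ref{elmonot} is phrased, the paper evidently intends the spanning case, so I would adopt that convention.)

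Next I would take an arbitrary simplex $\sigma\in I(H)$, i.e. a nonempty set $S\subseteq V(H)=V(G)$ that is independent in $H$: the induced subgraph $H[S]$ has no edge. I then want to show $S$ is independent in $G$. This is immediate: since $E(G)\subseteq E(H)$, any edge of $G[S]$ would also be an edge of $H[S]$, contradicting independence of $S$ in $H$; hence $G[S]$ has no edge and $S\in I(G)$. Therefore every simplex of $I(H)$ is a simplex of $I(G)$, and since both are subsets of the power set of the common vertex set with the subset-closure property already established (each $I(\cdot)$ is a simplicial complex, as recalled in Section~\ref{sec:ind}), $I(H)$ is a subcomplex of $I(G)$.

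Alternatively — and this is the slicker route I would actually present — I would invoke the clique/independent-set duality stated just above the proposition: $S$ is independent in a graph iff $S$ is a clique in its complement, so $I(G)=Cl(G^c)$ and $I(H)=Cl(H^c)$. Taking complements on a fixed vertex set reverses inclusion of edge sets, so $E(G)\subseteq E(H)$ gives $E(H^c)\subseteq E(G^c)$, i.e. $H^c$ is a subgraph of $G^c$. Proposition~\ref{clmonot} then yields $Cl(H^c)\subseteq Cl(G^c)$, that is, $I(H)\subseteq I(G)$ as simplicial complexes. Both arguments are one line once set up; there is no real obstacle here. The only subtlety worth flagging — and the one place a careless reader could stumble — is the vertex-set convention: the duality and the complement are taken relative to a \emph{fixed} common vertex set, which is exactly why the spanning-subgraph hypothesis is the natural one. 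I would close with the $\square$.

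\begin{proof}
Here graphs are spanning subgraphs in the sense that $V(G)=V(H)$ and $E(G)\subseteq E(H)$. By the duality recalled above, $I(G)=Cl(G^c)$ and $I(H)=Cl(H^c)$, where the complements are taken with respect to the common vertex set. From $E(G)\subseteq E(H)$ we get $E(H^c)\subseteq E(G^c)$, so $H^c$ is a subgraph of $G^c$, and Proposition~\ref{clmonot} gives $Cl(H^c)\subseteq Cl(G^c)$, i.e. $I(H)$ is a subcomplex of $I(G)$. (Directly: any $S$ independent in $H$ has $H[S]$ edgeless, hence $G[S]\subseteq H[S]$ is edgeless, so $S$ is independent in $G$.)
\hfill $\square$
\end{proof}
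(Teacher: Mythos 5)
Your proof is correct and matches the paper's (implicit) argument: the paper omits the proof entirely, treating the proposition as an immediate consequence of the clique/independent-set duality announced in the preceding sentence, which is exactly your ``slicker route'' via $I(G)=Cl(G^c)$ and Proposition~\ref{clmonot}. Your observation that the statement requires $V(G)=V(H)$ (a spanning subgraph, as in the filtrations $G_t$ the paper actually uses) is a fair and worthwhile clarification, but it does not change the substance of the argument.
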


This makes it impossible to associate a filtered complex to a weighted graph in the same way as in the previous sections. Still, we shall treat this case together with the clique complex in Section~\ref{ramsey}.

\subsection{Other complexes from graphs}

There are several other classes of sets in a graph $G=(V, E)$ which respect the definition of simplicial complex \cite{Jo08}. We have done a preliminary study on the following ones.

\begin{itemize}
\item the nonempty sets $\sigma\subseteq V$ such that the subgraph induced by $\sigma$ is acyclic;
\item the nonempty sets $\sigma\subseteq E$ such that the subgraph induced by $\sigma$ is acyclic;
\item (with $G$ connected) the nonempty sets $\sigma\subseteq E$ such that the subgraph induced by $\sigma$ is acyclic and the subgraph induced by $E-\sigma$ is connected.
\item for fixed positive $j<k$, the sets of $k$-cycles which share at least $j$ vertices;
\item for fixed positive $l$, the sets of maximal cliques sharing at least $l$ vertices.
\end{itemize}

We have decided to postpone the study of these complexes: The first three because non-isomorphic subgraphs may induce simplices of the same dimensions, so much structure is forgotten. As for the last two, the dependence on $j, k$ and $l$ respectively suggests that this type of complex might be of use in very specific applications.

\subsection{Distances}
% In the previous section we showed how, given a weighted graph $(G, f)$, it is possible to compute persistence diagrams representing the evolution of various graph-theoretical properties of $G$, along the filtration.

We recall that, for any two pairs $(X, f)$, $(Y, g)$, with $X, Y$ topological spaces and $ f: X \to \R^n$, $ g: Y
\to \R^n$ continuous, we have the following definition (see, e.g., \cite{dAFrLa06}).

\begin{definition}
The \emph{natural pseudodistance} between the pairs $(X,
f)$ and $(Y, g)$, denoted by
$\delta\left((X,  f),(Y,  g)\right)$,  is
\begin{itemize}
\item[(i)] the number $\inf_{h}\max _{x\in X}\|
f(x)- g(h(x))\|$ where $h$ varies in the set
$H(X,Y)$ of all the homeomorphisms between $X$ and $Y$, if $X$ and
$Y$ are homeomorphic;
\item[(ii)] $+\infty$, if $X$ and $Y$ are
not homeomorphic.
\end{itemize}
\end{definition}\label{pseudodistance}

Classification and retrieval of persistence diagrams (and consequently of the object they represent) is usually performed by the following distance, where persistence diagrams are completed by all points on the ``diagonal'' $\Delta=\{(u, v)\in\R \, | \, u=v\}$.

\begin{definition}
\emph{Bottleneck} (or \emph{matching}) \emph{distance}.
\\ Let $\mathcal{D}_{k}$ and $\mathcal{D'}_{k}$ be two persistence diagrams with a finite number of cornerpoints, the \emph{bottleneck distance} $d(\mathcal{D}_{k},\mathcal{D'}_{k})$ is defined as
\begin{equation}
d(\mathcal{D}_{k},\mathcal{D'}_{k})=\min_{\sigma}\max_{P\in\mathcal{D}_{k}}\hat{d}(P,\sigma(P))
\end{equation}
where $\sigma$ varies among all the bijections between $\mathcal{D}_{k}$ and $\mathcal{D'}_{k}$ and
\begin{equation}
\hat{d}((u,v),(u',v'))=\min\left\{\max\left\{|u-u'|,|v-v'|\right\},\max\left\{\frac{v-u}{2},\frac{v'-u'}{2}\right\}\right\}
\end{equation}
given $(u,v)\in\mathcal{D}_{k}$ and $(u',v')\in\mathcal{D'}_{k}$.
\end{definition}\label{match}

The next easy proposition does not provide any new lower bound, but assures us that small changes in the filtering function produce small changes in the persistence diagrams relative to the different constructions. 

\begin{proposition}
Let $(G, f)$, $(G', f')$ be weighted graphs and $D(f)$, $D(f')$ be the persistence diagrams of the persistent $r$-Betti numbers of $\big(K(G), f_K\big), \big(K(G', f'_K)\big)$ respectively, for $r$ fixed, $K = Cl, Nb, El$. Then
$$d\big(D(f), D(f')\big) \le \delta\big((G,  f), (G', f')\big)$$
\end{proposition}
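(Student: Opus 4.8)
The plan is to reduce everything to the classical stability theorem for persistence diagrams, which says that if two filtering functions on the \emph{same} complex differ by at most $\varepsilon$ in the sup norm, then their persistence diagrams are at bottleneck distance at most $\varepsilon$. So the real work is to transport an optimal homeomorphism $h: |G| \to |G'|$ (nearly realizing the natural pseudodistance $\delta$) into a comparison of the induced filtering functions $f_K$ and $f'_K$ on a common complex. First I would dispose of the trivial case: if $|G|$ and $|G'|$ are not homeomorphic then $\delta = +\infty$ and the inequality is vacuous, so assume a homeomorphism $h$ exists. A homeomorphism of one-dimensional complexes need not be simplicial, but it does carry vertices of degree $\neq 2$ to vertices of degree $\neq 2$, and more to the point, for graphs the natural pseudodistance is computed (up to passing to barycentric subdivisions, which do not change the homeomorphism type of the space nor, by the remarks in Sections~\ref{sec:cli} and \ref{sec:ind}, the relevant homology) by an optimal \emph{graph isomorphism} between suitable subdivisions, with the weight of a subdivided edge distributed along it. Thus I would first replace $(G,f)$ and $(G',f')$ by subdivisions on which $h$ restricts to a graph isomorphism $\varphi$, and observe that $\delta\big((G,f),(G',f')\big) = \max_e |f(e) - f'(\varphi(e))| =: \varepsilon$ on this common combinatorial model (the infimum over homeomorphisms being attained on such a finite family).

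Next, transport the complex: since $\varphi$ is a graph isomorphism, it induces an isomorphism $Cl(G) \cong Cl(G')$, $Nb(G) \cong Nb(G')$, $El(G) \cong El(G')$ — each of these constructions is functorial for graph isomorphisms, as is clear from their set-theoretic definitions. Call this common abstract complex $K$, and let $g = f_K$, $g' = (f'_K)\circ\varphi^{-1}$ be the two filtering functions it now carries. The key step is then the estimate
\begin{equation}
\max_{\sigma \in K} |g(\sigma) - g'(\sigma)| \le \varepsilon .
\end{equation}
For the clique complex this is immediate: on a $0$-simplex $\langle v\rangle$ the value is a min of edge weights over a fixed edge set, on a higher simplex a max of edge weights over a fixed edge set, and $|\min a_i - \min b_i| \le \max_i |a_i - b_i|$, likewise for $\max$; since corresponding edges differ by at most $\varepsilon$, the claim follows. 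For the neighborhood and enclaveless complexes the argument is the same in spirit but routed through the threshold definition: $g(\sigma) = \bar t$ is the least $t$ such that $\sigma$ lies in a neighborhood (resp. enclaveless set) of $G_t$, and since $G_t \subseteq G'_{t+\varepsilon}$ and $G'_t \subseteq G_{t+\varepsilon}$ (because corresponding edge weights differ by at most $\varepsilon$, so every edge present at level $t$ on one side is present at level $t+\varepsilon$ on the other), membership is preserved under an $\varepsilon$-shift, giving $|g(\sigma)-g'(\sigma)| \le \varepsilon$. Having (1), the classical stability theorem for the bottleneck distance applied to the two filtrations of the single complex $K$ yields $d\big(D(f),D(f')\big) \le \varepsilon = \delta\big((G,f),(G',f')\big)$, which is the assertion.

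The step I expect to be the genuine obstacle is the first one: justifying that the natural pseudodistance between weighted graphs, defined via arbitrary homeomorphisms of the underlying topological spaces, is actually computed by a combinatorial isomorphism of suitable subdivisions carrying edge weights in a controlled way, and in particular that the $\sup$-norm discrepancy of the transported filtering functions is bounded by $\delta$ itself rather than by something larger. A homeomorphism can bend an edge, map part of it across a vertex, or stretch it non-affinely, so one must argue that for the purpose of the $\max_e$-type filtering values only the assignment of edges to edges matters and that an optimal $h$ may be taken piecewise-linear on a common subdivision; this is where I would be most careful, possibly by first proving the inequality for the restricted class of ``weight-respecting graph isomorphisms'' and then showing any homeomorphism can be approximated by one without increasing the relevant cost. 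The remaining pieces — functoriality of $Cl, Nb, El$ under isomorphisms, the min/max Lipschitz estimates, the threshold-shift argument, and the invocation of bottleneck stability — are all routine.
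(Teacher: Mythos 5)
Your overall strategy is the paper's: transport the comparison onto a single complex via an isomorphism of graphs, bound the sup-norm discrepancy of the two induced filtering functions (your estimate (1); the $\min$/$\max$ Lipschitz bounds for $Cl$ and the threshold-shift argument for $Nb$ and $El$ are all correct), and invoke bottleneck stability. You also correctly single out the delicate point. However, your proposed resolution of that point does not work. Passing to (barycentric or other) subdivisions so that an optimal homeomorphism $h:|G|\to|G'|$ becomes a graph isomorphism destroys the very complexes under consideration: $Cl$, $Nb$ and $El$ are not invariant under subdivision of $G$. For instance, if $G$ is a $3$-cycle then $Cl(G)$ is a full $2$-simplex (contractible), while $Cl$ of any subdivision of $G$ is that subdivision itself, i.e. a circle; similarly $Nb$ and $El$ change radically. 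The remarks you cite from Sections~\ref{sec:cli} and~\ref{sec:ind} say something different: that an arbitrary complex $K$ can be realized as $Cl(G)$ with $G$ the $1$-skeleton of the barycentric subdivision \emph{of $K$}; they do not say that $Cl(G)$ and $Cl$ of a subdivision of $G$ have the same homology. So after your first step the diagrams you feed into the stability theorem are no longer $D(f)$ and $D(f')$, and the argument collapses. (There is also no reason the infimum over homeomorphisms should be attained by a map that is simplicial on a finite subdivision, and ``distributing'' an edge weight along a subdivided edge is not compatible with the definitions of $f_{Cl}$, $f_{Nb}$, $f_{El}$.)

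The paper avoids all of this by reading $\delta\big((G,f),(G',f')\big)$ combinatorially: since $f$ is defined on edges rather than on points of $|G|$, the only sensible instance of the natural pseudodistance here is the minimum, over graph isomorphisms $\psi:G\to G'$, of $\max_{e\in E}|f(e)-f'(\psi(e))|$ (and $+\infty$ if $G\not\cong G'$). Every such $\psi$ induces a simplicial isomorphism $K(G)\to K(G')$, and your estimate (1) shows that this induced isomorphism costs no more than $\psi$ does; hence $\delta\big((K(G),f_K),(K(G'),f'_K)\big)\le\delta\big((G,f),(G',f')\big)$, and the classical stability theorem finishes the proof. In short: keep everything from your second paragraph onward, but delete the subdivision step and work directly with graph isomorphisms $G\to G'$, which is the comparison class the statement implicitly intends.
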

\begin{proof}
Let the graph $G=(V, E)$ be isomorphic to $G'$; then $K(G)$ is a complex isomorphic to $K(G')$ (and the polyhedra $|K(G)|, \ |K(G')|$ are homeomorphic). To each isomorphism from $G$ to $G'$ there corresponds an isomorphism from $K(G)$ to $K(G')$ (and a homeomorphism from $|K(G)|$ to $|K(G')|$), but not conversely, in general. So

\medskip
%\newline
\null \hfill$\delta\Big(\big(K(G), f_K\big),  \big(K(G'), f'_K)\big)\Big) = \min_{\varphi\in \tilde{H}} \max_{\sigma\in K(G)}|f_K(\sigma)-f'_K\big(\varphi(\sigma)\big)| \le$\hfill\null
\newline
\null\hfill $\le\min_{\psi\in \overline{H}}\max_{e\in E}|f(e)-f'\big(\psi(e)\big)| = \delta\big((G,  f), (G', f')\big)$ \hfill\null
\medskip
\newline
where $\tilde{H}$ is the set of all simplicial isomorphisms from $K(G)$ to $K(G')$ and $\overline{H}$ is the set of all graph isomorphisms from $K(G)$ to $K(G')$. On the other side,
$$d\big(D(f), D(f')\big) \le \delta\Big(\big(K(G), f_K\big),  \big(K(G'), f'_K)\big)\Big)$$
is the classical stability result for filtered complexes or topological spaces (see, e.g., \cite{CoEdHa07,ChaCo*09,dAFrLa10,Les15}).
\hfill $\square$
\end{proof}

\begin{remark}
Unfortunately, the inequality between the natural pseudodistances of filtered complexes and filtered graphs may be strict, so we cannot get an universality result yet, although the elegant construction of \cite[Prop. 5.8]{Les15} applies to $K=Cl$ through Prop.~\ref{clbetti}.
\end{remark}

As hinted previously, $I(G)$ is excluded from this subsection, because of its monotonically decreasing behaviour with respect to inclusion (Prop.~\ref{imonot}). However, there is an interesting "extended" diagram that comes exactly from this phenomenon.

\subsection{An extended persistence of Ramsey type}\label{ramsey}

The celebrated Ramsey principle \cite{Ra30}, in its most common graph-theoretical version \cite[sect. 12.3]{BoMu11} makes it clear that one should also consider independent sets if one is interested in cliques and conversely. An equivalent viewpoint is: When interested in the information conveyed by the cliques of $G$, it is natural to also take into account the cliques of $G^c$. In the persistence field, the concept of {\em extended persistence} \cite{CoEdHa09} explores the lower half-plane $\Delta^- = \{(u, v)\in \mathbb{R} \, | \, u>v\}$ by relative homology.
We somehow merge these two philosophies in the following setting.

\begin{figure}[tb]
\centering
\includegraphics[width = \textwidth]{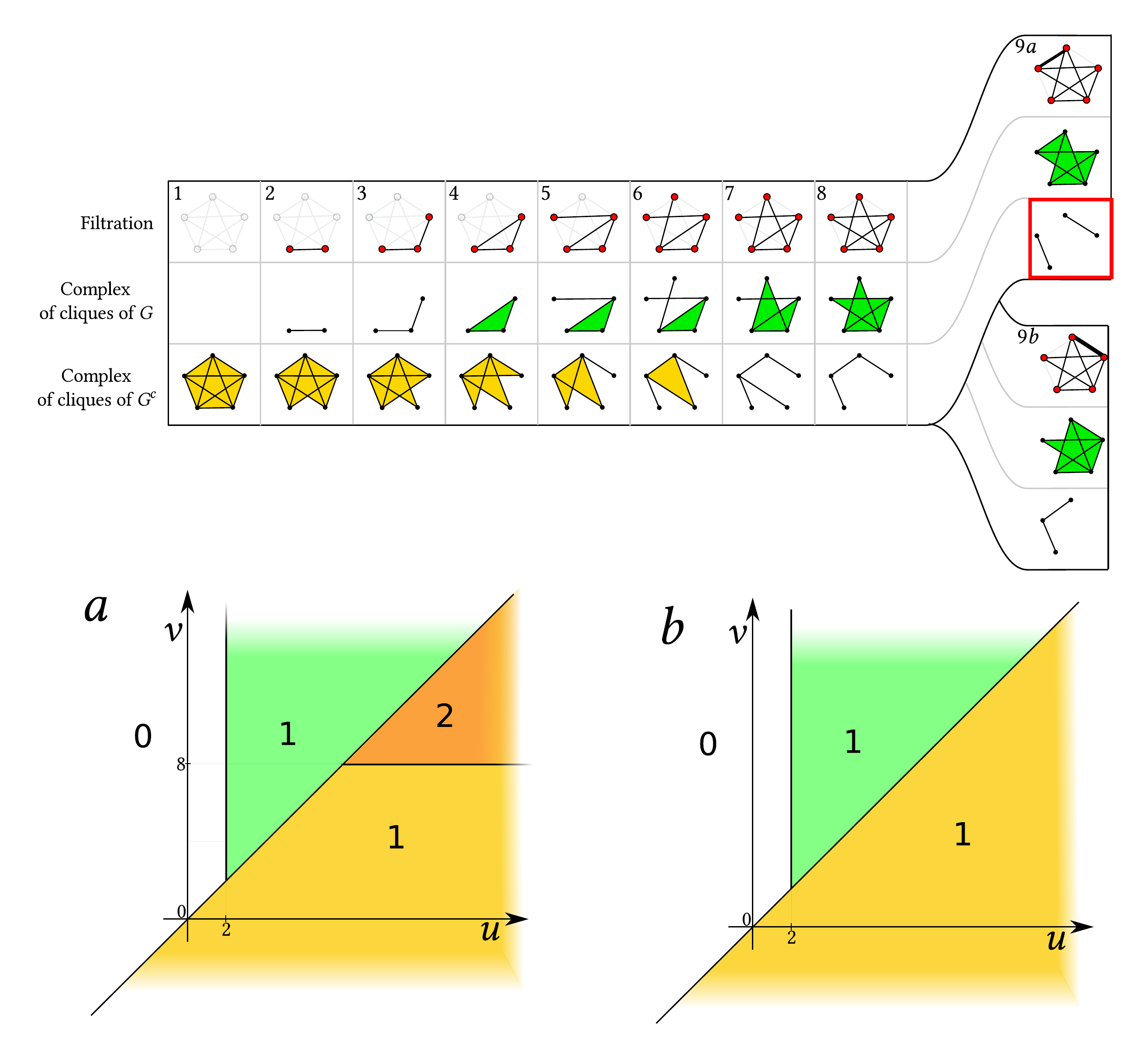}
\caption{Two filtrations and their extended persistent 0-Betti number functions.\label{extfig}}
\end{figure}

Given a weighted graph $\big(G=(V, E), f\big)$,
build the pair $(\overline{G}, \overline{f})$, where $\overline{G}=(V, \overline{E})$ is the complete graph on the vertex set $V$, and
$$\begin{array}{cccc}
\overline{f}:& \overline{E}&\to & \mathbb{R}\cup \{+\infty\}\\
&e &\mapsto& \cases{f(e) \ \ \textnormal{if} \ e\in E \\ +\infty \ \ \textnormal{otherwise}}
\end{array}
$$

Now, with a slight abuse we have that $\big(Cl(\overline{G}, (-\overline{f})\big)$ is a filtered simplicial complex. For any weighted graph $(H, h)$ let the function $\beta^r_{(H, h)}$ be the persistent $r$-Betti number function of $\big(Cl(H), h_{Cl}\big)$.

\begin{definition}\label{extended}
The {\em extended persistent} $r${\em -Betti number function} of $(G, f)$ is $\overline{\beta}^r_{(G, f)}: \mathbb{R}^2\to \mathbb{Z}$, such that 
\[
\begin{array}{cccc}
&(u, v) & \mapsto &
\cases{\beta^r_{(G, f)}(u, v) \ \ \textnormal{if} \ \ u \le v\\
&\\
\beta^r_{\big(\overline{G}, -\overline{f}\big)}(-u, -v) \ \ \textnormal{if} \ \ u > v}
\end{array}
\]
\end{definition}

These functions are actually carrying more information than the ones relative just to $\big(Cl(G), f_{Cl}\big)$, as Fig.~\ref{extfig} shows on two filtrations of a same graph, which differ just in the last step.
The two filtrations are indistinguishable using just the persistent Betti functions on $\big(Cl(G), f_{Cl}\big)$: the persistent 0-Betti number functions are equal, and for $n\geq 1$ the persistent $n$-Betti number functions are trivially zero. On the contrary, as shown in the figure, the extended persistent 0-Betti number functions  are different.

\begin{remark}
Of course, the same extension can be defined also for the other constructions, i.e. by considering neighborhoods of $G$ and of $G^c$, enclaveless sets of $G$ and of $G^c$, etc. For the moment we restrict our attention to this case, because of the role of cliques and independent sets both in theory and applications.
\end{remark}

\section{Conclusions and future work}
The place occupied by graphs and networks in data representation and analysis is continuously gaining importance. Here, we showed how weighted graphs can be studied via the classical persistent homology paradigm, moving beyond the boundaries of the standard clique-based approaches.

We showed how persistent homology, combined with well known graph-theoretical concepts, can be used to reveal novel information about weighted graphs. Examples are given considering cliques, independent sets, neighborhoods and enclaveless sets. Furthermore, we explored the duality between cliques and independent sets by producing a Ramsey-inspired extended persistence.

The ability of measuring dissimilarity plays an essential role in all aspects of data analysis. Therefore, we have stressed the connection of these novel filtering functions and persistence diagrams with the natural pseudodistance.

\subsection{Possible developments}

After having defined the filtered complexes of Sect.~\ref{togrape} and their persistent homology, it would be important to study the information they convey in graph-theoretical terms.

We want to further investigate which complexes can be obtained from the considered constructions and which cannot.

Even in the cases when every sequence of Betti numbers can be produced by a certain construction (as is the case of Cor.~\ref{clbetti}), we need to know whether the natural pseudodistance between the resulting spaces coincides with the one between graphs, if we want to get an optimality (or universality) result like \cite[Thm. 5.5]{Les15}.

More structures in a graph respect the inheritance property, necessary for building a simplicial complex. We intend to proceed in examining them within the proposed framework.

A possible connection with \texttt{Mapper} \cite{SiMe*07} deserves attention.

We are working at a combinatorial, axiomatic definition of ``persistence functions'' which will enable direct use of persistence diagrams for weighted graphs, without passing through the construction of a simplicial complex.

\section*{Acknowledgments}
We are indebted to Diego Alberici, Emanuele Mingione, Pierluigi Contucci (whose research originated the present one), Luca Moci, Fabrizio Caselli, Patrizio Frosini and Pietro Vertechi for many fruitful discussions.
Article written within the activity of INdAM-GNSAGA.

\bibliographystyle{abbrv}
\bibliography{TopologicalGraphPersistence}

\end{document}